\newtheorem{theorem}{Theorem}[section]
\newtheorem{corollary}[theorem]{Corollary}
\newtheorem{definition}[theorem]{Definition}
\newtheorem{lemma}[theorem]{Lemma}
\newtheorem{proposition}[theorem]{Proposition}
\newtheorem{remark}[theorem]{Remark}
\newtheorem{thmx}{Conjecture}
\DeclareMathOperator{\Ad}{Ad}
\DeclareMathOperator{\Arg}{Arg}
\DeclareMathOperator{\tr}{tr}
\DeclareMathOperator{\Ker}{Ker}
\DeclareMathOperator{\Real}{Re}
\DeclareMathOperator{\Imaginary}{Im}
\DeclareMathOperator{\hol}{hol}
\DeclareMathOperator{\Hom}{Hom}
\DeclareMathOperator{\Teich}{Teich}
\renewcommand{\Re}{\Real}
\renewcommand{\Im}{\Imaginary}
\title{Pullback of symplectic forms to the space of circle patterns}
\author{Wai Yeung Lam}
\thanks{This work was partially supported by the FNR grant CoSH O20/14766753.}
\address{Department of Mathematics, University of Luxembourg, Maison du nombre, 6 avenue de la Fonte, L-4364 Esch-sur-Alzette, Luxembourg.} \email{wyeunglam@gmail.com}
\begin{document}

	\begin{abstract}
		We consider circle patterns on surfaces with complex projective structures. We investigate two symplectic forms pulled back to the deformation space of circle patterns. The first one is Goldman's symplectic form on the space of complex projective structures on closed surfaces. The other is the Weil-Petersson symplectic form on the Teichm\"{u}ller space of punctured surfaces. We show that their pullbacks to the space of circle patterns coincide. It is applied to prove the smoothness of the deformation space, which is an essential step to the conjecture that the space of circle patterns is homeomorphic to the Teichm\"{u}ller space of the closed surface. We further conjecture that the pullback of the symplectic forms is non-degenerate and defines a symplectic structure on the space of circle patterns.
	\end{abstract}
	
		\maketitle

\section{Introduction}

Conformal maps between surfaces are fundamental in low dimensional topology with a wide range of applications. They are characterized as mappings that locally preserve angles, sending infinitesimal circles to themselves. Instead of infinitesimal size, a circle packing in the plane is a configuration of finite-size circles where certain pairs are mutually tangent. Thurston proposed regarding the map induced from two circle packings with the same tangency pattern as a discrete conformal map \cite{Stephenson2005}. He realized that circle packings provide a natural discretization of the Riemann mapping theorem, which is verified by Rodin and Sullivan \cite{Rodin1987}. With circle packings, one could define discrete conformal structures and compare them with classical conformal structures.

Generalizing the notion of circle packings, we consider configurations of circles which are allowed to intersect. A circle pattern in the plane is a realization of a planar graph such that each face has a circumcircle passing through the vertices. By adding diagonals, we assume each face is a triangle and the circle pattern is determined by cross ratios as follows. Assume $(V, E, F)$ is a triangulation of the planar graph. Given a realization of the vertices $z:V \to \mathbb{C}\cup\{\infty\}$, we associate a \textit{complex} cross ratio to the common edge $\{ij\}$ shared by triangles $\{ijk\}$ and $\{jil\}$ (See Figure \ref{fig:delaunay}):
\[
X_{ij} :=  -\frac{(z_k - z_i)(z_l -z_j)}{(z_i - z_l)(z_j - z_k)} =X_{ji} \in \mathbb{C}
\]
\begin{figure}[h!] \centering
				\includegraphics[width=0.85\textwidth]{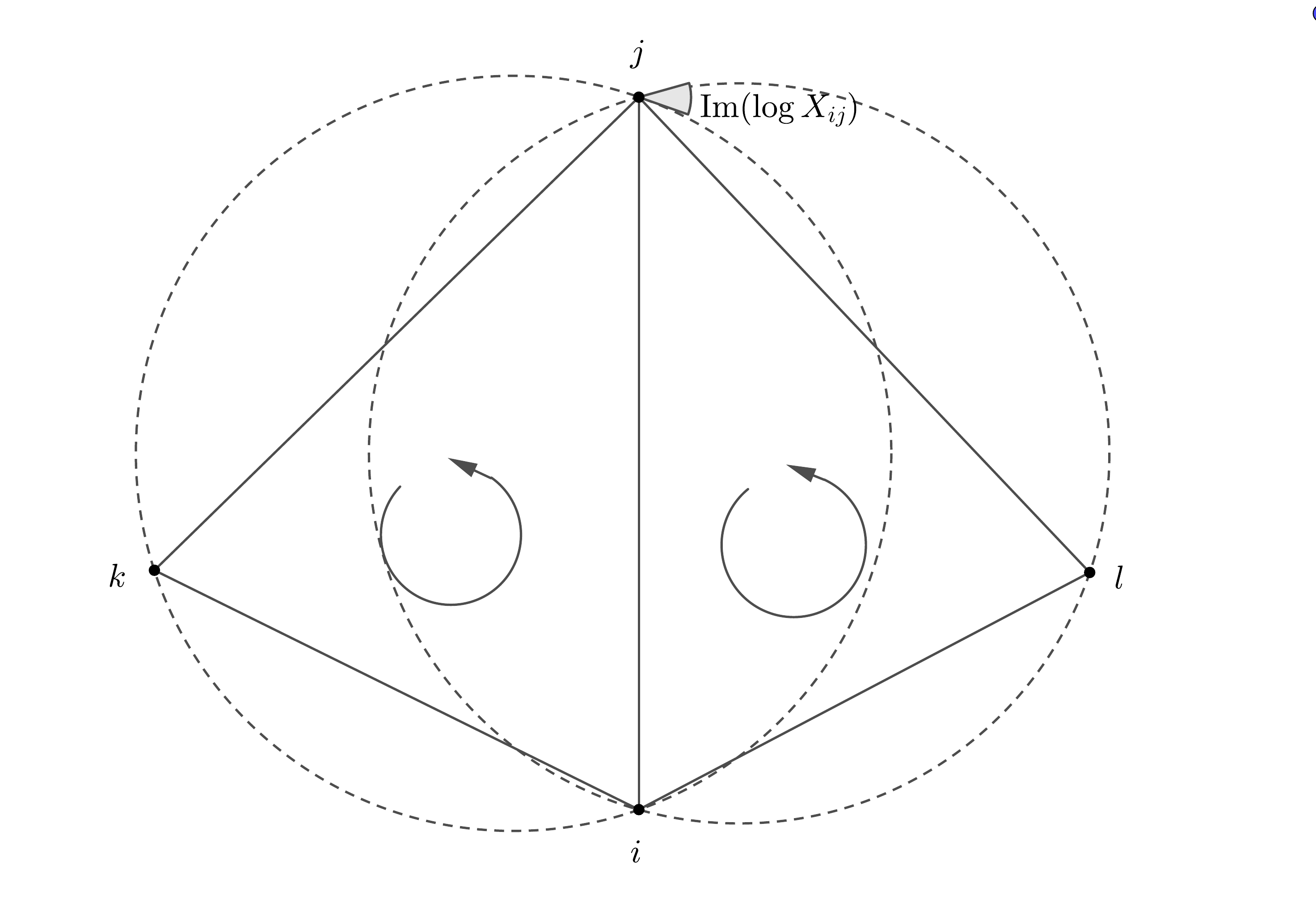}
	\caption{Two triangular faces $ijk$ and $jil$ share a common edge $ij$.} 
	\label{fig:delaunay}
\end{figure}
It defines a function  $X: E  \to \mathbb{C}$ satisfying certain polynomial equations, which can be defined generally on any surface. Throughout the paper, we consider closed oriented surfaces $S_g$ with genus $g>1$.
\begin{definition}\label{def:crsys}
	Suppose $(V,E,F)$ is a triangulation of a closed surface $S_{g}$, where $V$, $E$ and $F$ denote respectively the sets of vertices, edges and faces. An unoriented edge is denoted by $\{ij\}=\{ji\}$ indicating that its end points are vertices $i$ and $ j$. A cross ratio system is an assignment $X:E \to \mathbb{C}$ such that for every vertex $i$ with adjacent vertices numbered as $1$, $2$, ..., $r$ in the clockwise order counted from the link of $i$,
	\begin{gather} 
		\Pi_{j=1}^n X_{ij} =1  \label{eq:crproduct}\\
		X_{i1} + X_{i1} X_{i2} + X_{i1}X_{i2}X_{i3} + \dots +  X_{i1}X_{i2}\dots X_{ir} =0 \label{eq:crsum}
	\end{gather}
	where $X_{ij} = X_{ji}$. 
\end{definition}

A cross ratio system provides a recipe to lay out neighboring circumdisks. Equations \eqref{eq:crproduct} and \eqref{eq:crsum} ensure that the holonomy around each vertex under the gluing construction is the identity. Indeed, the holonomy around vertices are M\"{o}bius transformations generally, which could be represented as matrices in $SL(2,\mathbb{C})$. Equation \eqref{eq:crproduct} ensures the corresponding matrix to have eigenvalues $1$ while Equation \eqref{eq:crsum} demands the off-diagonal entries to be zero.

We are interested in circle patterns with prescribed Delaunay intersection angles. Observe that the imaginary part $\Im (\log X) : E \to [0,2\pi)$ is the intersection angles of circumdisks. 

\begin{definition} Given a triangulation of $S_g$, a Delaunay angle structure is an assignment of angles $\Theta:E \to [0,\pi)$ with $\Theta_{ij}=\Theta_{ji}$ satisfying the following:   
	\begin{enumerate}[(i)]
		\item For every vertex $i$,  \[ \sum_j \Theta_{ij} = 2\pi\] where the sum is taken over the neighboring vertices of $i$ on the universal cover.
		\item For any collection of edges $(e_0,e_1,e_2,\dots,e_r=e_0)$ whose dual edges form a simple closed contractable path on the surface, then
		\[
		\sum_{i=1}^{n} \Theta_{e_i} > 2\pi
		\]
		unless the path encloses exactly one primal vertex.
	\end{enumerate}
	We consider cross ratio systems with prescribed Delaunay angles 
	\[
	P(\Theta) = \{ X: E \to \mathbb{C}| \Im (\log X) = \Theta \text{ and satisfying } \eqref{eq:crproduct}\eqref{eq:crsum} \}.
	\]
\end{definition}    
It is known that these conditions on $\Theta$ are the necessary and sufficient for $P(\Theta)$ to be non-empty. Particularly, for any Delaunay angle structure $\Theta$, there is a unique Fuchsian structure that supports a circle pattern with intersection angle $\Theta$ (See \cite{Bobenko2004}).

Generally, every Delaunay cross ratio system in $P(\Theta)$ induces a complex projective structure on the closed surface. We denote $P(S_g)$ the space of marked complex projective structures on the closed surface and it is known that $P(S_g)\cong \mathbb{R}^{12g-12}$. By forgetting the circle pattern and considering the underlying complex projective structure, one obtains a mapping from the space of circle patterns to the Teichm\"{u}ller space of the closed surface via a composition of maps
\[
P(\Theta) \xrightarrow{f} P(S_g) \xrightarrow{\pi} \Teich(S_g) \cong \mathbb{R}^{6g-6}
\]
where $f$ is the forgetful map and $\pi$ is the classical uniformization map for Riemann surfaces.

\begin{thmx}\cite{KMT2003} \label{con:kmt}
	For any Delaunay angle structure $\Theta:E \to [0,\pi)$, the map \[\pi \circ f : P(\Theta) \to \Teich(S_g)\] is a homeomorphism. In particular, $P(\Theta)$ is a smooth manifold of real dimension $6g-6$ and the mapping $f:P(\Theta)  \to    P(S_g)$ is an embedding. In other words, $f(P(\Theta))$ is a section of the fiber bundle $P(S_g) \xrightarrow{\pi} \Teich(S_g)$.
\end{thmx}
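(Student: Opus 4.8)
The plan is to establish \Cref{con:kmt} by exhibiting $\pi\circ f$ as a proper local diffeomorphism onto a contractible target and then invoking covering space theory. The scheme has four steps: (i) equip $P(\Theta)$ with the structure of a smooth manifold of real dimension $6g-6$; (ii) prove $\pi\circ f$ is a local diffeomorphism; (iii) prove $\pi\circ f$ is proper; (iv) deduce from (i)--(iii) that $\pi\circ f$ is a covering map and pin its degree down to one, after which the statements about $f$ are formal.

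For step (i): by construction $P(\Theta)$ is the real-analytic set carved out of $\{X:E\to\mathbb{C}\}$ by the angle constraint $\Im(\log X)=\Theta$ together with the vertex relations \eqref{eq:crproduct} and \eqref{eq:crsum}. Using $|E|=3|V|+6g-6$ and $|F|=2|V|+4g-4$ (Euler's formula with $2|E|=3|F|$), and noting that condition (i) of a Delaunay angle structure makes the argument part of \eqref{eq:crproduct} automatic, one counts $|E|$ real unknowns remaining after imposing $\Im(\log X)=\Theta$ against $3|V|$ real vertex equations, leaving expected dimension $6g-6$. The substance is transversality: the differential of the defining map must have constant maximal rank along $P(\Theta)$. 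Here the main theorem of the present paper enters as a tool --- equality of the pullback of Goldman's form with the pulled-back Weil--Petersson form, together with non-degeneracy of this common $2$-form, forces the Zariski tangent space at each point of $P(\Theta)$ to have dimension exactly $6g-6$, and with lower semicontinuity of the rank this yields smoothness. (Alternatively one linearizes the gluing equations directly in the style of Kojima--Mizushima--Tan.)

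Step (ii) is infinitesimal rigidity of circle patterns: a tangent vector $v$ to $P(\Theta)$ with $d(\pi\circ f)(v)=0$ is an infinitesimal deformation of the pattern fixing the underlying conformal class, and one must show $v=0$. I would attack this by a discrete maximum principle, propagating the infinitesimal change of radii through the vertex equations and showing the Delaunay sign conditions on $\Theta$ obstruct a nonzero solution, in the spirit of the rigidity arguments for circle packings (Thurston, Rodin--Sullivan) and their refinements for intersecting circles; the symplectic picture gives a complementary route, since $d(\pi\circ f)(v)=0$ confines $v$ to the kernel of a non-degenerate pulled-back form. As $\dim P(\Theta)=\dim\Teich(S_{g})=6g-6$, an injective differential is an isomorphism, so $\pi\circ f$ is a local diffeomorphism.

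Step (iii), properness, is the main obstacle and the one genuinely analytic ingredient: if $X_{n}\in P(\Theta)$ and $\pi\circ f(X_{n})\to\tau$ in $\Teich(S_{g})$, then some subsequence of $X_{n}$ must converge in $P(\Theta)$. The only failure mode is degeneration of the pattern --- some $|X_{ij}|$ escaping every compact subset of $(0,\infty)$, i.e.\ a circle whose radius tends to $0$ or $\infty$ --- while the conformal structure stays in a compact part of moduli space. I would rule this out by comparing the circle-pattern data to a hyperbolic metric in the fixed bounded region of $\Teich(S_{g})$, via a ``ring lemma'' type estimate bounding the ratio of radii of adjacent circles in terms of the combinatorics and the fixed angles $\Theta$, combined with condition (ii) of a Delaunay angle structure, which is exactly what prevents a cluster of circles from collapsing onto a subcomplex. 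Once (i)--(iii) hold, $\pi\circ f$ is a covering map onto its image, which is open (local diffeomorphism) and closed (properness) in the connected space $\Teich(S_{g})\cong\mathbb{R}^{6g-6}$, hence all of it; since the base is simply connected the covering is trivial, and a degree computation --- either uniqueness of the $\Theta$-pattern over the Fuchsian conformal class by rigidity of circle patterns on hyperbolic surfaces, or connectedness of $P(\Theta)$ --- shows it has degree one. Therefore $\pi\circ f$ is a homeomorphism, $f=\bigl(\pi|_{f(P(\Theta))}\bigr)^{-1}\circ(\pi\circ f)$ is an embedding, and $f\circ(\pi\circ f)^{-1}:\Teich(S_{g})\to P(S_{g})$ is the asserted section of $\pi$.
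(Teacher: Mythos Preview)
The statement you are trying to prove is not a theorem in this paper: the environment \texttt{thmx} is declared as ``Conjecture,'' and the introduction states explicitly that for $g>1$ the assertion ``is still largely open'' and that ``it is not yet clear whether $P(\Theta)$ is smooth.'' There is therefore no proof in the paper to compare your attempt against. The paper's actual contribution toward Conjecture~\ref{con:kmt} is Theorem~\ref{thm:main} together with Corollary~\ref{cor:infimplysmooth}, which reduce the smoothness of $P(\Theta)$ near a point $X$ to the hypothesis that $\Ker\hol$ is trivial at $X$ --- a hypothesis verified only in special cases (Fuchsian structures via the discrete Laplacian, circle packings via Bonsante--Wolf).

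Your outline treats as established several things that are in fact open. In step~(i) you appeal to ``non-degeneracy of this common $2$-form'' on $W_X^{\mathbb{R}}$, but that is precisely Conjecture~\ref{conj:strong}, not a theorem; what the paper proves is only that $\Im\omega_G$ vanishes on $\hol(W_X^{\mathbb{R}})$, giving $\dim\hol(W_X^{\mathbb{R}})\le 6g-6$, and turning this into $\dim W_X^{\mathbb{R}}=6g-6$ still requires $\Ker\hol=0$. In step~(ii) your discrete maximum principle is exactly the mechanism that works at Fuchsian points (positive edge weights) but has no known extension to general projective structures; moreover, injectivity of $d(\pi\circ f)$ is strictly stronger than triviality of $\Ker\hol=\Ker df$, since it must also rule out deformations that move the projective structure within a fixed conformal class, and the symplectic identity $\omega_G=\tfrac12\omega_P^{\mathbb{C}}$ does not by itself address this. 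Step~(iii), properness, is a substantial analytic problem in its own right, and a ring-lemma heuristic is not a proof. Your four-step template is the natural strategy --- indeed it is the one pursued in the partial results the paper cites --- but each of steps~(i)--(iii) contains a genuinely open problem, which is why the statement is labeled a conjecture.
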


The conjecture suggests an interesting difference between discrete conformal structures and classical conformal structures. Elements in $P(\Theta)$ for a fixed $\Theta$ are regarded as sharing the same discrete conformal structure. If discrete conformal structures corresponded exactly to classical conformal structures, then $f(P(\Theta))$ would lie within a single fiber. Instead, the conjecture suggests that $f(P(\Theta))$ is transversal to the fibers everywhere. The conjecture is known to hold for tori (genus $g=1$) \cite{Lam2021}. When $g>1$, it is still largely open. Although there are partial results \cite{Schlenker2018,BW2023}, it is not yet clear whether $P(\Theta)$ is smooth. 

This paper investigates the pullback of two symplectic forms to $P(\Theta)$. Firstly, over the space of complex projective structures $P(S_g)$, there is a complex symplectic form $\omega_G$ and Goldman showed that it coincides with the Weil-Petersson symplectic form over the section of the Fuchsian structures \cite{Goldman1984}. By forgetting the circle patterns
\[
 f: P(\Theta) \to P(S_g) \cong \mathbb{R}^{12g-12},
\]
 the symplectic form $\omega_G$ is pulled back to the space of circle patterns $P(\Theta)$. Secondly, there is an embedding of $P(\Theta)$ to the Teichm\"{u}ller space of a punctured surface $\Teich(S_{g,n})$ with $n=|V|$, which consists of marked complete hyperbolic metrics with cusps. By viewing the Riemann sphere as the boundary of hyperbolic 3-space, every Delaunay circle pattern corresponds to a locally convex pleated surface in $\mathbb{H}^3$. The complex cross ratios $X$ provides a recipe to construct the pleated surface by gluing ideal triangles, with shear coordinates $\Re \log X$ and bending angles $\Im \log X$. By forgetting the bending angles, every circle pattern induces a complete hyperbolic metric with cusps on the punctured surface. Thus we have an embedding
\[
\tilde{f}:P(\Theta)  \xhookrightarrow{} \Teich(S_{g,n}) \cong \mathbb{R}^{6g-6+2n}
\]
Via the embedding $\tilde{f}$, the Weil-Petersson symplectic form $\omega_{P}$ on $\Teich(S_{g,n}) $ is pulled back to $P(\Theta)$. We shall show that the pullbacks of $\omega_G$ and $\omega_P$ coincide on $P(\Theta)$.

To state our results precisely, for any Delaunay $X \in P(\Theta)$, we consider a complex vector space  $W_{X}^{\mathbb{C}}$  defined by the linearization of the cross-ratio equations \eqref{eq:crproduct}\eqref{eq:crsum} (See Definition \ref{def:wc}). If $P(\Theta)$ is smooth, then $W_{X}^{\mathbb{R}}:= W_{X}^{\mathbb{C}} \cap \mathbb{R}^{E}$ is isomorphic to the tangent space $T_X P(\Theta)$.
\begin{theorem}\label{thm:main}
	The pullback of Goldman's symplectic form $\omega_G$ and the complexfied symplectic form $\omega^{\mathbb{C}}_P$ on the complex vector space $W_{X}^{\mathbb{C}}$ satisfy
	\[
	\omega_G= \frac{1}{2} \omega_P^{\mathbb{C}}.
	\]
	It implies that the image of $W_X^{\mathbb{R}}$ under the holonomy map
	\[
		\hol: W_X^{\mathbb{R}} \to H^1_{\Ad\rho}(\pi_1(S_g),sl(2,\mathbb{C})) \cong T_{f(X)}P(S_g)
	\]
	is isotropic with respect to the imaginary part of the symplectic form $\Im \omega_{G}$ and hence has real dimension at most $6g-6$, which is half of $\dim_{\mathbb{R}} T_{f(X)}P(S_g)$.
\end{theorem}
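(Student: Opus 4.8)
I would compute both $\omega_G$ and $\omega_P^{\mathbb{C}}$ on $W_X^{\mathbb{C}}$ in one common system of edge coordinates, show that each equals --- up to a universal constant --- the combinatorial \emph{Thurston intersection form} of the triangulation, and then compare the two constants; the asserted identity $\omega_G=\tfrac12\omega_P^{\mathbb{C}}$ becomes the statement that the Goldman constant is half the Weil--Petersson one. It is convenient to use the logarithmic gauge $u\colon E\to\mathbb{C}$, $u_e=\delta\log X_e$: in this coordinate $W_X^{\mathbb{C}}$ is the complex solution space of the linearizations of \eqref{eq:crproduct}--\eqref{eq:crsum}, and $W_X^{\mathbb{R}}=W_X^{\mathbb{C}}\cap\mathbb{R}^E$ does correspond to $T_XP(\Theta)$ because along $P(\Theta)$ the bending angle $\Im\log X_e=\Theta_e$ is frozen, so $\delta\log X_e=\delta\log|X_e|\in\mathbb{R}$. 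For an ideal triangulation there is a canonical constant-coefficient alternating form
\[
\omega_{\mathrm{Th}}(u,v)=\sum_{t\in F}\ \sum_{\substack{e,e'\text{ sides of }t\\ e'\text{ follows }e}}\bigl(u_ev_{e'}-u_{e'}v_e\bigr),
\]
and the plan is to prove $\omega_G=c_G\,\omega_{\mathrm{Th}}$ and $\omega_P^{\mathbb{C}}=c_P\,\omega_{\mathrm{Th}}$ on $W_X^{\mathbb{C}}$ with $c_G=\tfrac12c_P$ (writing $\omega_G$ also for the pullback $\hol^{*}\omega_G$ of Goldman's form to $W_X^{\mathbb{C}}$).

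\textbf{The Weil--Petersson side.} The embedding $\tilde{f}$ sends $X$ to the complete cusped hyperbolic metric whose shear coordinates on the ideal triangulation of $S_{g,n}$ (punctures at the vertices $V$) are $\Re\log X_e=\log|X_e|$; hence $d\tilde{f}$ identifies $W_X^{\mathbb{R}}$ with the space of shear-coordinate variations $u_e=\delta\log|X_e|$. The shear-coordinate expression for the Weil--Petersson form on $\Teich(S_{g,n})$ --- equivalently Fock's formula for the Weil--Petersson Poisson bracket of cross-ratio coordinates, or the Penner / S\"{o}zen--Bonahon identification of the Weil--Petersson form with Thurston's symplectic form --- gives $\tilde{f}^{*}\omega_P=c_P\,\omega_{\mathrm{Th}}$ for an explicit universal $c_P$. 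Extending complex-bilinearly to $W_X^{\mathbb{C}}$ yields $\omega_P^{\mathbb{C}}=c_P\,\omega_{\mathrm{Th}}$ with $u\in\mathbb{C}^E$.

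\textbf{The Goldman side.} A deformation $u\in W_X^{\mathbb{C}}$ deforms the developing map of the complex projective structure $f(X)$ --- equivalently the locally convex pleated surface in $\mathbb{H}^3$ onto which it develops --- and hence its holonomy $\rho$. Because the developing map is assembled by gluing M\"{o}bius images of the circumdisks with the cross-ratios $X_e$ as transition data, the induced class $\hol_{*}u\in H^1_{\Ad\rho}(\pi_1(S_g),sl(2,\mathbb{C}))$ admits a cocycle representative carried by the dual $1$-skeleton, whose contribution at an edge $e$ is $u_e$ times a fixed nilpotent (``shear'') element of $sl(2,\mathbb{C})$ conjugated by the relevant holonomy. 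Goldman's form is the cup product $\omega_G(\hol_*u,\hol_*v)=\langle\hol_*u\smile\hol_*v,[S_g]\rangle$ with respect to the trace form on $sl(2,\mathbb{C})$; since both classes are represented by such combinatorially supported cocycles, the cup product localizes to the transverse intersections of edge-paths, i.e.\ to pairs of sides sharing a triangle, and evaluates to $c_G\,\omega_{\mathrm{Th}}(u,v)$. (Alternatively one may invoke Bonahon's computation of Goldman's form for pleated-surface holonomies in terms of Thurston's form in complexified shear coordinates.) Matching $c_G$ against $c_P$ then produces the factor $\tfrac12$ --- the mismatch between the trace-form normalization in Goldman's construction and Wolpert's normalization of $\omega_P$ --- so $\omega_G=\tfrac12\omega_P^{\mathbb{C}}$ on $W_X^{\mathbb{C}}$.

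\textbf{The corollary, and the main obstacle.} Since $\omega_P$ is real-valued on $\Teich(S_{g,n})$, its pullback $\tilde{f}^{*}\omega_P$, hence $\omega_P^{\mathbb{C}}$, is real-valued on the real locus $W_X^{\mathbb{R}}$; by the identity just proved so is $\omega_G$, i.e.\ $\Im\,\omega_G(\hol u,\hol v)=0$ for all $u,v\in W_X^{\mathbb{R}}$, which is exactly the statement that $\hol(W_X^{\mathbb{R}})$ is isotropic for $\Im\omega_G$. Finally $\omega_G$ is a non-degenerate $\mathbb{C}$-bilinear alternating form on the complex vector space $H^1_{\Ad\rho}(\pi_1(S_g),sl(2,\mathbb{C}))\cong T_{f(X)}P(S_g)$ of complex dimension $6g-6$, and its imaginary part is then a non-degenerate real symplectic form on the underlying $\mathbb{R}^{12g-12}$ (if $\Im\omega_G(v,\cdot)\equiv0$, replacing the second argument $w$ by $iw$ and using $\mathbb{C}$-bilinearity gives $\Re\omega_G(v,\cdot)\equiv0$, hence $\omega_G(v,\cdot)\equiv0$ and $v=0$); therefore every $\Im\omega_G$-isotropic subspace has real dimension at most $6g-6$. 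The main obstacle is the Goldman computation: extracting a genuinely combinatorially supported cocycle for $\hol_*u$ from the cross-ratio deformation and executing the localization of the cup product with the correct orientation signs and the correct universal constant --- this is where the factor $\tfrac12$ is determined and where one must check that the resulting combinatorial pairing is precisely the complexification of the one appearing on the Weil--Petersson side rather than another one. A secondary point is the compatibility of the closed-surface Goldman form with the punctured-surface cross-ratio picture, which should follow because both are governed by the same local cup-product formula once the peripheral holonomies are trivial --- which is exactly what \eqref{eq:crproduct}--\eqref{eq:crsum} guarantee.
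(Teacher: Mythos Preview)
Your strategy is essentially the paper's: both interpolate through a combinatorial cup-product/Thurston form on the triangulation, identify it with $\tfrac12\omega_P^{\mathbb{C}}$ on one side (via Penner's formula, the paper's Corollary~\ref{cor:symppair}) and with $\omega_G$ on the other (via an $sl(2,\mathbb{C})$-valued cocycle carried by the dual $1$-skeleton and a Stokes-type reduction over a fundamental polygon), and the isotropy/dimension argument for $W_X^{\mathbb{R}}$ is identical. The paper simply carries out explicitly what you sketch: the ``localization'' of the cup product is the trace identity of Lemma~\ref{eq:traceprod}, and the matching with Goldman's form is the boundary computation in Proposition~\ref{prop:goldcup}.

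One concrete correction: the edge element in your cocycle is \emph{not} nilpotent. In the paper's normalization it is
\[
\alpha(\vec{ij})=\frac{u_{ij}}{z_j-z_i}
\begin{pmatrix}\tfrac{z_i+z_j}{2}&-z_iz_j\\ 1&-\tfrac{z_i+z_j}{2}\end{pmatrix},
\]
which is semisimple with eigenvalues $\pm u_{ij}/2$ and eigenvectors $\binom{z_i}{1},\binom{z_j}{1}$; this is what ``shear along the edge'' means. The distinction matters because the trace of the product of two such elements depends on the cross ratio of the four endpoints (Lemma~\ref{eq:traceprod}), and it is precisely the fact that this trace collapses to the constant $\pm\tfrac12$ when the edges share a vertex that makes the Goldman cup product reduce to the constant-coefficient Thurston form you wrote down. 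If the elements were genuinely nilpotent that mechanism would be different, so you should rephrase this point before executing the computation.
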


We apply it to study the smoothness of $P(\Theta)$.

\begin{corollary}\label{cor:infimplysmooth}
	If a circle pattern $X \in P(\Theta)$ is infinitesimally rigid (i.e. $\Ker \hol$ is trivial), then
	\[
	\dim_{\mathbb{R}}  W_X^{\mathbb{R}} =6g-6.
	\]
	It yields that in a neighborhood of $X$, $P(\Theta)$ is a real analytic manifold of dimension $6g-6$ and under the forgetful map, $f(P(\Theta))$ is a Lagrangian submanifold of $P(S_g)$ with respect to the real symplectic form $\Im \omega_G$. 
\end{corollary}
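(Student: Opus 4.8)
The plan is to prove Corollary \ref{cor:infimplysmooth} as a direct consequence of Theorem \ref{thm:main} together with the implicit function theorem. First I would recall that $W_X^{\mathbb{C}}$ is cut out inside $\mathbb{C}^E$ by the linearization of the polynomial equations \eqref{eq:crproduct}\eqref{eq:crsum}; since these equations have real coefficients, complex conjugation acts on $W_X^{\mathbb{C}}$, and $W_X^{\mathbb{C}} = W_X^{\mathbb{R}} \otimes_{\mathbb{R}} \mathbb{C}$, so $\dim_{\mathbb{C}} W_X^{\mathbb{C}} = \dim_{\mathbb{R}} W_X^{\mathbb{R}}$. By Theorem \ref{thm:main} the holonomy map $\hol : W_X^{\mathbb{R}} \to T_{f(X)}P(S_g)$ has image isotropic for $\Im\omega_G$; since $\Im\omega_G$ is a (nondegenerate) symplectic form on the $(12g-12)$-dimensional real vector space $T_{f(X)}P(S_g)$, any isotropic subspace has dimension at most $6g-6$. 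Combining infinitesimal rigidity ($\Ker\hol = 0$, so $\hol$ is injective) with this bound gives $\dim_{\mathbb{R}} W_X^{\mathbb{R}} \le 6g-6$.

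The reverse inequality is where the main content lies, and it should come from a dimension count on the defining equations. The space $P(\Theta)$ near $X$ is the zero set of the system \eqref{eq:crproduct}\eqref{eq:crsum} intersected with the affine condition $\Im\log X = \Theta$; a standard count (as in the setup preceding Conjecture \ref{con:kmt}, where $P(S_g) \cong \mathbb{R}^{12g-12}$ and $\Teich(S_g)\cong\mathbb{R}^{6g-6}$) shows that the expected dimension of $P(\Theta)$ is $6g-6$, i.e. the linearized system always has a solution space of dimension \emph{at least} $6g-6$. Hence $\dim_{\mathbb{R}} W_X^{\mathbb{R}} \ge 6g-6$, forcing equality $\dim_{\mathbb{R}} W_X^{\mathbb{R}} = 6g-6$. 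At an infinitesimally rigid point, the differential of the full defining map then has locally constant rank (the kernel has the minimal possible dimension $6g-6$, hence is constant nearby by semicontinuity of rank), so the analytic implicit function theorem applies and exhibits $P(\Theta)$ as a real analytic manifold of dimension $6g-6$ in a neighborhood of $X$.

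Finally, for the Lagrangian claim: on this neighborhood $T_X P(\Theta) \cong W_X^{\mathbb{R}}$ via the identification discussed before Theorem \ref{thm:main}, and $f_* = \hol$ on $W_X^{\mathbb{R}}$ up to the canonical isomorphism $T_{f(X)}P(S_g) \cong H^1_{\Ad\rho}(\pi_1(S_g),sl(2,\mathbb{C}))$. By Theorem \ref{thm:main} the image $f_*(T_X P(\Theta))$ is isotropic for $\Im\omega_G$, and by infinitesimal rigidity $f$ is an immersion at $X$, so $\dim_{\mathbb{R}} f_*(T_X P(\Theta)) = 6g-6$ is exactly half of $\dim_{\mathbb{R}} T_{f(X)} P(S_g)$; a maximal isotropic subspace is Lagrangian. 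Since this holds at every point of the neighborhood, $f(P(\Theta))$ is a Lagrangian submanifold of $(P(S_g), \Im\omega_G)$ there.

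I expect the main obstacle to be making the ``expected dimension at least $6g-6$'' lower bound rigorous without circular reasoning — one must show the linearized cross-ratio system genuinely has a solution space of dimension $\ge 6g-6$ independently of smoothness of $P(\Theta)$, presumably by exhibiting $6g-6$ independent solutions coming from deformations of the complex projective structure that preserve the angle data $\Theta$, or by a direct rank estimate on the Jacobian of \eqref{eq:crproduct}\eqref{eq:crsum} restricted to the angle-fixing slice. Everything else — the conjugation-invariance bookkeeping, the symplectic-linear-algebra bound, and the passage from constant rank to an analytic manifold via the implicit function theorem — is routine.
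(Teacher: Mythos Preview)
Your overall strategy matches the paper's exactly: upper bound from the isotropy statement in Theorem~\ref{thm:main} plus injectivity of $\hol$, lower bound from an equation count, then the constant rank theorem. Two remarks are in order.

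First, your opening claim is false: the linearized equations do \emph{not} have real coefficients. Equation~\eqref{eq:xcrsum} involves the complex numbers $X_{ij}$ as coefficients, so complex conjugation does not preserve $W_X^{\mathbb{C}}$, and in fact the paper computes $\dim_{\mathbb{C}} W_X^{\mathbb{C}} = 6g-6+n$ while aiming to show $\dim_{\mathbb{R}} W_X^{\mathbb{R}} = 6g-6$; these differ. Fortunately you never use this claim, so simply delete it.

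Second, your worry about the lower bound is misplaced --- it is the easiest part. The space $W_X^{\mathbb{R}}$ is the kernel of a real-linear map $\mathbb{R}^E \to \mathbb{R}^{3|V|}$: for each vertex $i$, equation~\eqref{eq:xcrproduct} gives one real constraint and equation~\eqref{eq:xcrsum} (being complex) gives two. By rank--nullity, $\dim_{\mathbb{R}} W_X^{\mathbb{R}} \ge |E| - 3|V| = 6g-6$, using the Euler characteristic relation $|E| = 6g-6+3|V|$ for a triangulation. This is exactly the paper's inequality~\eqref{eq:VRdim}, and it requires no knowledge of the smoothness of $P(\Theta)$ and no construction of explicit solutions. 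Your proposed alternative of ``exhibiting $6g-6$ independent solutions coming from deformations of the complex projective structure that preserve the angle data $\Theta$'' would be circular, since the existence of such deformations is essentially what one is trying to establish.
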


Our results reduce the problem of smoothness to infinitesimal rigidity. There have been partial results on infinitesimal rigidity by various means. For circle patterns at Fuchsian structures with arbitrary Delaunay angles $\Theta$, infinitesimal rigidity can be proved by considering the discrete hyperbolic Laplacian \cite{Lam2024I}. In the setting of circle packings, Bonsante and Wolf \cite{BW2023} proved infinitesimal rigidity for general complex projective structures by an index argument, where they also verify the smoothness. Here circle packings are special cases of Delaunay circle patterns, since a circle packing superposed with its dual packing is a Delaunay circle pattern with $\Theta$ taking values either $0$ or $\pi/2$ (See Figure \ref{fig:circles}).
 
If Conjecture \ref{con:kmt} holds, then $P(\Theta)$ is a manifold of even dimension and it makes sense to ask whether the pullback of the symplectic form defines a symplectic structure on $P(\Theta)$. 
\begin{thmx}\label{conj:strong}
	The bilinear form $\Re \omega_{G} = \frac{1}{2} \omega_{P}$ over $P(\Theta)$ is non-degenerate. 
\end{thmx}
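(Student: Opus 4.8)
\medskip
\noindent\textbf{Proposed approach to Conjecture~\ref{conj:strong}.}
The plan is to grant Conjecture~\ref{con:kmt}, so that $P(\Theta)$ is a connected smooth $(6g-6)$-manifold on which $f$ and $\tilde f$ are embeddings and every point is infinitesimally rigid, and then to recast non-degeneracy as a total reality statement. The linear-algebraic observation is this: if $L$ is a real subspace of half the real dimension of a complex symplectic vector space $(M,\mathbf j,\Omega)$ with $\Omega$ $\mathbb C$-bilinear, then, using $\Omega(\mathbf j u,v)=i\,\Omega(u,v)$, one checks easily that \emph{whenever $\Im\Omega|_L=0$} the real form $\Re\Omega|_L$ is non-degenerate if and only if $L$ is totally real, i.e.\ $L\cap\mathbf j L=\{0\}$. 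Applying this with $M=T_{f(X)}P(S_g)=H^1_{\Ad\rho}(\pi_1(S_g),sl(2,\mathbb C))$, $\Omega=\omega_G$ and $L=\hol(W_X^{\mathbb R})$---for which $\Im\omega_G|_L=0$ and $\dim_{\mathbb R}L=6g-6$ by Corollary~\ref{cor:infimplysmooth}---shows that Conjecture~\ref{conj:strong} is equivalent to the assertion that $f(P(\Theta))$ is a \emph{totally real} submanifold of the complex manifold $P(S_g)$, equivalently (via Theorem~\ref{thm:main}) that $\tilde f(P(\Theta))$ is a \emph{symplectic} submanifold of $(\Teich(S_{g,n}),\omega_P)$.

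It then remains to prove this total reality, and since it is an open condition on $X$ I would first establish it near the distinguished Fuchsian circle pattern $X_\Theta\in P(\Theta)$---the one whose underlying Fuchsian point $\tau_\Theta\in\Teich(S_g)$ carries a circle pattern with angles $\Theta$. There $f(X_\Theta)$ is the standard projective structure over $\tau_\Theta$, lying on Goldman's Fuchsian slice, whose tangent $H^1_{\Ad\rho}(\pi_1(S_g),sl(2,\mathbb R))$ is totally real in $H^1_{\Ad\rho}(\pi_1(S_g),sl(2,\mathbb C))$. As both the Fuchsian slice and $f(P(\Theta))$ are sections of $\pi\colon P(S_g)\to\Teich(S_g)$ that agree at $f(X_\Theta)$, the subspace $\hol(W_{X_\Theta}^{\mathbb R})$ is the graph, over the totally real Fuchsian-slice tangent, of a linear map into the fibre direction measuring the first-order discrepancy between the two sections; an explicit first-order analysis of $f$ at $X_\Theta$ should compute this map and confirm that its graph remains totally real, settling a neighbourhood of $X_\Theta$.

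The real difficulty is to go from local to global. Total reality, like non-degeneracy of a closed $2$-form, is open but not closed along a path, so connectedness of $P(\Theta)\cong\mathbb R^{6g-6}$ does not help, and the cohomological content of Theorem~\ref{thm:main} supplies no quantitative control that could prevent $\hol(W_X^{\mathbb R})$ from meeting $i\,\hol(W_X^{\mathbb R})$. The needed input is a positivity or convexity property uniform over $P(\Theta)$, and the natural candidate is the local convexity of the pleated surface carried by a Delaunay circle pattern: all bending angles satisfy $\Theta_{ij}\in(0,\pi)$ with $\sum_j\Theta_{ij}=2\pi$ at each vertex, together with the monotonicity condition (ii) in the definition of a Delaunay angle structure. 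Concretely, a nonzero $v$ witnessing a failure of total reality should, after passing through $\hol$ and using $\omega_G=\tfrac12\omega_P^{\mathbb C}$, yield an infinitesimal deformation of the locally convex pleated surface whose self-pairing vanishes, and one would want to contradict this by a definite-sign identity in the spirit of the discrete hyperbolic-Laplacian proof of infinitesimal rigidity of Fuchsian circle patterns \cite{Lam2024I}, or of the index argument of Bonsante--Wolf for circle packings \cite{BW2023}. I expect this step to be the crux: a proof of it would likely come bundled with, and be of comparable depth to, a proof of infinitesimal rigidity for all Delaunay circle patterns---precisely the ingredient still missing from Conjecture~\ref{con:kmt}.
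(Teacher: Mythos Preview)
The statement you are addressing is stated in the paper as an \emph{open conjecture}, not a theorem; the paper gives no proof. It only remarks that a forthcoming paper will verify the genus-one case via the discrete Laplacian, and in Section~\ref{sec:symcom} records an equivalent reformulation: non-degeneracy of $\omega_P$ on $W_X^{\mathbb R}$ holds if and only if $\omega_P$ is non-degenerate on the symplectic complement $\mathrm{span}_{\mathbb R}\{\Re x^{(i)},\Im x^{(i)}:i\in V\}$ inside $(W^{\mathbb R},\omega_P)$.

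Your proposal should therefore be compared with those observations rather than with a proof. Your linear-algebraic reduction---that, granting infinitesimal rigidity, $\Re\omega_G|_{W_X^{\mathbb R}}$ is non-degenerate if and only if $\hol(W_X^{\mathbb R})$ is a totally real subspace of $H^1_{\Ad\rho}(\pi_1(S_g),sl(2,\mathbb C))$---is correct and is a genuinely different reformulation from the paper's: yours lives in the target $P(S_g)$ and uses the complex structure there, while the paper's lives in the ambient $\Teich(S_{g,n})$ and uses the explicit finite spanning set $\{x^{(i)}\}$ coming from vertex motions. Each reformulation suggests a different attack (complex-geometric versus combinatorial/Laplacian), but neither is carried through in general.

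That said, your text is a programme, not a proof, and you are candid about this. Even the local step at the Fuchsian point $X_\Theta$ is only asserted (``should compute this map and confirm that its graph remains totally real''); note that $\hol(W_{X_\Theta}^{\mathbb R})$ need not coincide with the tangent to the Fuchsian slice, so this step already requires work. The global step you correctly flag as the crux, requiring positivity input of depth comparable to infinitesimal rigidity for arbitrary Delaunay patterns---itself open. In sum: your totally-real reformulation is a valid and useful observation not present in the paper, but the proposal does not go beyond the paper's own state of knowledge on Conjecture~\ref{conj:strong}.
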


In the last section, we collect several observations of the symplectic forms. In a forthcoming paper, we verify Conjecture \ref{conj:strong} in the case of tori using the discrete Laplacian.

\begin{figure}
		\includegraphics[width=0.7\textwidth]{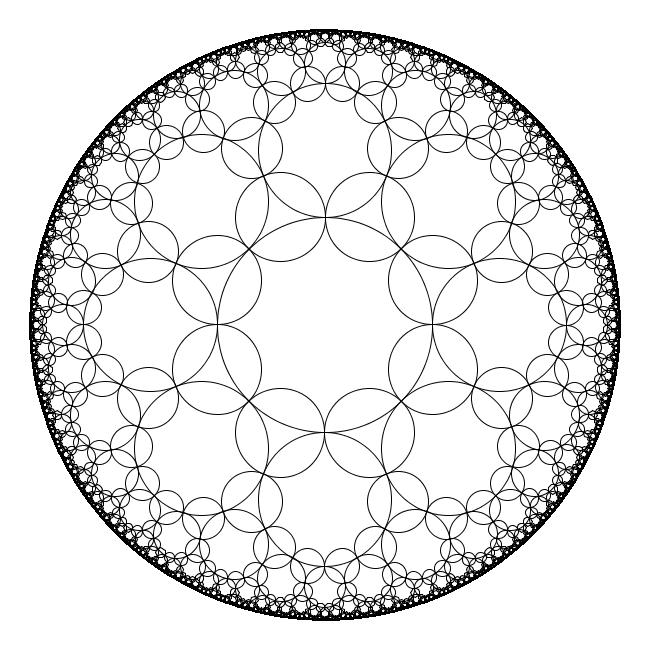}
\caption{A circle pattern at a Fuchsian structure obtained by superposing a circle packing with its dual packing. In this case, the intersection angles $\Im (\log X)$ are all $\pi/2$. The intersection points form our vertex set $V$. Two vertices are joined by an edge if the edge forms a common chord of two circles. In this way, we obtain a Delaunay decomposition with a bunch of triangular faces and octagonal faces. In order to parameterize the circle pattern by cross ratios, we triangulate faces by adding diagonals. Over these extra diagonals, the intersection angles $\Im (\log X)$ are zero since neighbouring circumcircles coincide.} 
\label{fig:circles}
\end{figure}

\subsection{Related work}

Conjecture \ref{con:kmt} and the symplectic form have their correspondence in Thurston's grafting construction. Thurston introduced a grafting construction to produce a complex projective structure from a bending lamination on a closed hyperbolic surface and suggest that the grafting map
\[
\mbox{Gr}: \Teich(S_g) \times \mbox{ML}(S_g)\to P(S_g)
\]
is a homeomorphism, where $\mbox{ML}(S_g)$ is the set of bending measured laminations \cite{Tan1992}. Scannell-Wolf \cite{Scannell2002} showed that for a fixed bending measured lamination $\Theta$, the conformal grafting
\[
\mbox{gr}_{\Theta}:  \Teich(S_g)  \to  \Teich(S_g)
\]
defined via compositions
\[
 \Teich(S_g) \rightarrow  \Teich(S_g) \times \{\Theta\} \xrightarrow{\mbox{Gr}} P(S_g) \xrightarrow{\pi} \Teich(S_g) 
\]
is a homeomorphism. Regarding symplectic structures, there is a symplectic form $\omega_H$ over $\Teich(S_g) \times \mbox{ML}(S_g)$ induced from the cotangent bundle while there is Goldman's symplectic form $\omega_G$ on $P(S_g)$. By considering the re-normalized volume of the hyperbolic ends in hyperbolic 3-space, Krasnov-Schlenker \cite{Krasnov2009} showed that the pullback of the real part $\Re \omega_{G}$ under the grafting map $\mbox{Gr}$ coincides with $\omega_H$. Particularly, it is non-degenerate. 

In the setting of circle patterns, we fix a Delaunay angle structure $\Theta$ over a triangulation of a closed surface. The triangulation could be regarded as an ideal triangulation of a punctured surface $S_g - V$ with $\Theta$ being a bending measured lamination. For any complete hyperbolic metric with cusps on the punctured surface, one can perform grafting to obtain a complex projective structure which might or might not have singularities at the punctures. The set $\tilde{f}(P(\Theta)) \subset \Teich(S_{g,n})$ is the subset which yields complex projective structures \textit{without} singularities at the punctures under the grafting construction. Conjecture \ref{con:kmt} is analogous to Scannell-Wolf's results asking whether the composition
\[
\tilde{f}(P(\Theta)) \subset \Teich(S_{g,n}) \rightarrow  \tilde{f}(P(\Theta)) \times \{\Theta\} \xrightarrow{\mbox{Gr}} P(S_g) \xrightarrow{\pi} \Teich(S_g) 
\]
is a homeomorphism. In this context, our theorem relating the symplectic form $\omega_P$ on $\Teich(S_{g,n})$ to $\omega_G$ on $P(S_g)$ is analogous to Krasnov-Schlenker's results. It would be interesting to understand how to deduce our results by fully adopting Krasnov-Schlenker's approach. Since both the real part and the imaginary part of $\omega_G$ are involved in our case, it is expected to consider the complex volume of a hyperbolic end. The real part is the re-normalized volume and the imaginary part should be the Chern-Simons invariant, whose meaning in our context is however superficial. Instead, our approach is closer to Bonahon-S\"{o}zen's approach \cite{BonSo2001}

Definition of \ref{def:crsys} can be seen as the gluing equation for a pleated surface from ideal triangles in hyperbolic 3-space. There is an analogous gluing equation for hyperbolic 3-manifolds from ideal tetrahedra \cite{Neumann1985}. In their study, a symplectic relation is used to deduce the smoothness of the configuration space \cite{Choi2004}. It is analogous to our use of the symplectic form to study the smoothness of $P(\Theta)$. 

There is vast literature studying complex projective structures on punctured surfaces, which are related to meomorphic quadratic differentials \cite{Guruprasad1998}. However, the complex projective structures with trivial holonomy at punctures as in our case are usually excluded.

\section{Linearization of cross-ratio equations}

The set $P(\Theta)$ is defined as the zero set of the algebraic system \eqref{eq:crproduct}\eqref{eq:crsum}. Its smoothness amounts to showing that the Jacobian of the algebraic system has constant nullity and then the smoothness follows from the constant rank theorem. It motivates to study the linearization of \eqref{eq:crproduct}\eqref{eq:crsum} and identify its kernel using logarithmic derivative.

 Suppose $X^{(t)}:E \to \mathbb{C}$ is a 1-parameter family of cross ratios satisfying Equation \eqref{eq:crproduct} \eqref{eq:crsum} with $X=X^{(t)}|_{t=0}$, then by differentiation the logarithmic derivative $x:= \frac{d}{dt}(\log X^{(t)})|_{t=0}$ satisfies for every vertex $i$ with adjacent vertices numbered as $1$, $2$, ..., $r$ in the clockwise order counted from the link of $i$,
 \begin{gather} 
 	\sum_{j=1}^r x_{ij} =0  \label{eq:xcrproduct}\\
 	x_{i1} X_{i1} + (x_{i1}+x_{i2}) X_{i1} X_{i2} + \dots +  (\sum_{j=1}^n x_{ij}) X_{i1}X_{i2}\dots X_{ir} =0 \label{eq:xcrsum}
 \end{gather}
 Furthermore, if $\Arg X^{(t)}$ remains constant for all $t$, then $x$ is purely real.

\begin{definition}\label{def:wc}
	Given a cross ratio system $X\in P(\Theta)$, we define a complex vector space $W_{X}^{\mathbb{C}}$ as the collection of functions $x:E \to \mathbb{C}$ satisfying Equation \eqref{eq:xcrproduct} \eqref{eq:xcrsum} for every vertex $i$.
	We further define a real vector space
	\[
	W_{X}^{\mathbb{R}}:= W_{X}^{\mathbb{C}} \cap  \mathbb{R}^{E}.
	\]
\end{definition}

Recall from the Euler characteristic, we have \[|E|=6g-6+3|V|=6g-6+3n.\]
By counting the number of variables and constraints, one deduces that
\begin{gather}
	\dim_{\mathbb{C}} W_{X}^{\mathbb{C}} \geq |E| - 2|V| = 6g-6 +|V| = 6g-6 + n.  \label{eq:VCdim}\\
	\dim_{\mathbb{R}} W_{X}^{\mathbb{R}} \geq |E| - 3|V| = 6g-6.  \label{eq:VRdim}
\end{gather}
where $n:=|V|$. The equalities hold if and only if the constraints are linearly independent. We collect some direct observations here.

\begin{proposition}
	Suppose $X \in P(\Theta)$ is a Delaunay circle pattern. The complex vector space $W_{X}^{\mathbb{C}}$ is identified as the kernel of the Jacobian of the algebraic system \eqref{eq:crproduct}\eqref{eq:crsum} via logarithmic derivative. Furthermore
	\[
	\dim_{\mathbb{C}} W_{X}^{\mathbb{C}} = 6g-6 + n
	\]
	and $W_{X}^{\mathbb{C}}\cong T_X (P(S_{g}) \times \mbox{Conf}_n(S_g))$ where  $\mbox{Conf}_n(S_g)$ is the set of n-tuples of pairwise distinct points on $S_g$. 
\end{proposition}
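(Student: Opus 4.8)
The plan is to dispatch the three assertions in order. The first is essentially the computation recorded just before Definition \ref{def:wc}: for $X\in P(\Theta)$ every coordinate $X_{ij}$ is a finite nonzero complex number (its argument lies in $[0,\pi)$), so the coordinatewise logarithm is a biholomorphism near $X$, and conjugating the Jacobian of the system \eqref{eq:crproduct}\eqref{eq:crsum} by it turns the linearized equations into exactly \eqref{eq:xcrproduct}\eqref{eq:xcrsum}; this identifies $W_X^{\mathbb{C}}$ with the kernel of that Jacobian. It then remains to compute $\dim_{\mathbb{C}}W_X^{\mathbb{C}}$ and to identify it with a tangent space, and for this I would produce a single germ of holomorphic map and invert it.

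The map I have in mind is $\mathcal{G}(X')=(\sigma_{X'},p(X'))$, defined on a neighbourhood of $X$ in the algebraic variety cut out by \eqref{eq:crproduct}\eqref{eq:crsum}, landing in the total space of the bundle $\mathcal{C}_n\to P(S_g)$ whose fibre over a projective structure $\sigma$ is the configuration space of $n$ ordered points on the Riemann surface $\Sigma$ underlying $\sigma$; this total space is a complex manifold of dimension $6g-6+n$ and, since $P(S_g)$ is contractible, is topologically $P(S_g)\times\mbox{Conf}_n(S_g)$, the space appearing in the statement. Here $\sigma_{X'}$ and $p(X')$ come from the gluing construction described after Definition \ref{def:crsys}: a cross ratio system $X'$ near $X$ gives gluing data for circumdisks (ideal triangles in $\mathbb{H}^3$) along the edges of the triangulation, and performing the gluing on the universal cover produces a developing map $\dev_{X'}\colon\widetilde{S_g}\to\mathbb{C}\cup\{\infty\}$ with a holonomy representation and an equivariant placement of the lifted vertices; this is the restriction to the variety of a holomorphic map, defined on an ambient neighbourhood of $X$ in $\mathbb{C}^E$, into the deformation space of complex projective structures on the punctured surface $S_g\setminus V$. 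As observed after Definition \ref{def:crsys}, equations \eqref{eq:crproduct}\eqref{eq:crsum} cut out exactly the locus where the holonomy around every vertex is trivial in $PSL(2,\mathbb{C})$, so there $\dev_{X'}$ fills in over $V$ and descends to an honest complex projective structure $\sigma_{X'}$ on the \emph{closed} surface $S_g$ with the given marking, together with the $n$ distinct marked points $p(X')$. For $X\in P(\Theta)$ Delaunay the construction genuinely produces such a structure with $n$ distinct vertices, and ``being an immersion'' and ``vertices distinct'' are open conditions, so $\mathcal{G}$ is well defined near $X$; put $\Psi:=d\mathcal{G}_X\colon W_X^{\mathbb{C}}\to T_{\mathcal{G}(X)}\mathcal{C}_n$.

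The crucial observation is that $\mathcal{G}$ admits a holomorphic left inverse near $X$, the cross ratio map: given $(\sigma,p)$ close to $\mathcal{G}(X)$, lift to a developing map (well defined up to post-composition with $PSL(2,\mathbb{C})$), lift the marked points compatibly with the triangle combinatorics, and read off for each edge $\{ij\}$ shared by triangles $\{ijk\}$ and $\{jil\}$ the quantity
\[
-\frac{(\dev\tilde p_k-\dev\tilde p_i)(\dev\tilde p_l-\dev\tilde p_j)}{(\dev\tilde p_i-\dev\tilde p_l)(\dev\tilde p_j-\dev\tilde p_k)},
\]
which is well defined by M\"obius-invariance of the cross ratio and which, by the very construction of the gluing, returns $X'$ when fed $\mathcal{G}(X')$. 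Hence $\mathcal{G}$ is a germ of embedding and $\Psi$ is injective. Now \eqref{eq:VCdim} already gives $\dim_{\mathbb{C}}W_X^{\mathbb{C}}\ge 6g-6+n$, while injectivity of $\Psi$ together with $\dim_{\mathbb{C}}T_\sigma P(S_g)=6g-6$ (consistent with $P(S_g)\cong\mathbb{R}^{12g-12}$ and with the identification $T_{f(X)}P(S_g)\cong H^1_{\Ad\rho}(\pi_1(S_g),sl(2,\mathbb{C}))$) gives the reverse inequality; therefore $\dim_{\mathbb{C}}W_X^{\mathbb{C}}=6g-6+n$, the constraints \eqref{eq:xcrproduct}\eqref{eq:xcrsum} are linearly independent, and $\Psi$ is a linear isomorphism $W_X^{\mathbb{C}}\cong T_X(P(S_g)\times\mbox{Conf}_n(S_g))$. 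Conceptually this is the short exact sequence $0\to\bigoplus_{i=1}^{n}T_{p_i}\Sigma\to W_X^{\mathbb{C}}\xrightarrow{\ \hol\ }T_\sigma P(S_g)\to 0$: the kernel of $\hol$ is the set of deformations that move only the marked points, and $\hol$ is onto because any deformation of $\sigma$ is realized by recomputing cross ratios with the $p_i$ held fixed.

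The step I expect to be the main obstacle is the middle one: the soft but fiddly verification that the gluing construction produces a projective structure on the \emph{closed} surface precisely under \eqref{eq:crproduct}\eqref{eq:crsum} (no cone points, trivial vertex holonomy), that Delaunay-ness makes this stable under deformation of $X'$ within the variety, and that $\mathcal{G}$ and its cross ratio inverse are honestly holomorphic once one quotients out the $PSL(2,\mathbb{C})$ ambiguity of the developing map and the isotopy class of the triangulation. Granting that, the dimension count and the identification of tangent spaces are routine.
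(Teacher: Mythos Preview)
Your proposal is correct and follows the same strategy as the paper: map $W_X^{\mathbb{C}}$ into the tangent space of $P(S_g)\times\mbox{Conf}_n(S_g)$ by reading off the underlying projective structure together with the marked vertex positions, argue injectivity, and combine with the lower bound \eqref{eq:VCdim} to pin down the dimension. The paper's own proof is terser---it simply asserts that the map from $W_X^{\mathbb{C}}$ to infinitesimal deformations of the projective structure and the marked points is injective and then invokes \eqref{eq:VCdim}---whereas you spell out why injectivity holds by exhibiting the cross-ratio map as an explicit holomorphic left inverse to the gluing construction $\mathcal{G}$, and you also unpack the logarithmic-derivative identification of $W_X^{\mathbb{C}}$ with the Jacobian kernel; these are genuine elaborations but not a different argument.
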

\begin{proof}
	By forgetting the circles, every Delaunay circle pattern yields a complex projective structure on $S_g$ together with marked points $V$. Then every element in $W_{X}^{\mathbb{C}}$ corresponds to an infinitesimal deformation of marked points $V$ as well as the projective structure. This mapping is injective and thus $\dim_{\mathbb{C}} W_{X}^{\mathbb{C}}\leq 6g-6 + n$. Together with Equation \eqref{eq:VCdim}, we obtain the claim.
\end{proof}

In contrast, it is not obvious how to determine the dimension of $W_{X}^{\mathbb{R}}$. Deducing $\dim_{\mathbb{R}} W_{X}^{\mathbb{R}}$ is the heart of the question.

\begin{proposition}
	 The real vector space $W_{X}^{\mathbb{R}}$ corresponds to infinitesimal deformations of cross ratios that preserve $\Im \log X = \Theta$. They are identified as infinitesimal deformation of circle patterns with fixed intersection angles. If $\dim_{\mathbb{R}} W_{X}^{\mathbb{R}}=6g-6$ for all $X \in P(\Theta)$, then $P(\Theta)$ is a real analytic manifold of dimension $6g-6$ and 
	\[
	T_{X} P(\Theta) \cong W_{X}^{\mathbb{R}}.
	\] 
\end{proposition}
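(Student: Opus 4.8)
The plan is to separate the proposition into its interpretive content (what $W_X^{\mathbb{R}}$ is) and its analytic content (the manifold structure), and to obtain the latter from the real-analytic constant rank theorem once $P(\Theta)$ is presented as a level set.

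First I would check the identification of $W_X^{\mathbb{R}}$. For a one-parameter family $X^{(t)}\in P(\Theta)$ through $X$, the discussion preceding Definition \ref{def:wc} shows that $x=\frac{d}{dt}(\log X^{(t)})|_{t=0}$ lies in $W_X^{\mathbb{C}}$, and that $x$ is purely real precisely when $\Arg X^{(t)}\equiv\Theta$ is held fixed, i.e. when the deformation is through circle patterns with the prescribed Delaunay intersection angles. Conversely every $x\in W_X^{\mathbb{R}}$ arises this way: writing $\log X=\rho+i\Theta$ with $\rho\in\mathbb{R}^E$, the curve $t\mapsto\exp(\rho+tx+i\Theta)$ has logarithmic derivative $x$ at $t=0$ and keeps $\Im\log=\Theta$, so to first order it satisfies \eqref{eq:crproduct}\eqref{eq:crsum}. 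Hence $W_X^{\mathbb{R}}$ is exactly the space of infinitesimal deformations of the cross ratios, equivalently of the circle pattern, that fix all intersection angles.

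Next I would realize $P(\Theta)$ as the zero locus of a real-analytic map. On the affine subspace $A:=\{X:E\to\mathbb{C}\mid \Im\log X=\Theta\}\cong\mathbb{R}^E$, parametrized by $\rho=\Re\log X$, the relations \eqref{eq:crproduct} and \eqref{eq:crsum} pull back under $\rho\mapsto e^{\rho+i\Theta}$ to a real-analytic map $G:A\to\mathbb{R}^{3|V|}$: around each vertex $i$, the imaginary part of the logarithm of \eqref{eq:crproduct} reads $\sum_j\Theta_{ij}=2\pi$, which holds identically on $A$ by condition (i) of the Delaunay angle structure, so \eqref{eq:crproduct} contributes only the single real equation $\sum_j\rho_{ij}=0$, while \eqref{eq:crsum} contributes its real and imaginary parts, two real equations. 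Thus $P(\Theta)=G^{-1}(0)$, and by the logarithmic-derivative computation the real-linear differential $dG_X$ has kernel exactly $W_X^{\mathbb{R}}$; by rank-nullity and \eqref{eq:VRdim}, $\rank dG_X=|E|-\dim_{\mathbb{R}}W_X^{\mathbb{R}}\le 3|V|$, with equality if and only if $\dim_{\mathbb{R}}W_X^{\mathbb{R}}=6g-6$.

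Finally, under the hypothesis $\dim_{\mathbb{R}}W_X^{\mathbb{R}}=6g-6$ for every $X\in P(\Theta)$, the rank of $dG$ attains its maximal value $3|V|$ at every point of $P(\Theta)$; since rank is lower semicontinuous and bounded above by $3|V|$, it is constant equal to $3|V|$ on an open neighborhood $U\supseteq P(\Theta)$ in $A$. Applying the constant rank theorem (valid in the real-analytic category via the analytic implicit function theorem) to $G|_U$ shows that $P(\Theta)=G|_U^{-1}(0)$ is a real-analytic submanifold of $A$ of codimension $3|V|$, hence of dimension $|E|-3|V|=6g-6$, with $T_XP(\Theta)=\ker dG_X=W_X^{\mathbb{R}}$. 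The main obstacle here is not the analytic machinery, which is routine, but the bookkeeping that pins the target down to $\mathbb{R}^{3|V|}$ rather than $\mathbb{R}^{4|V|}$: one must use the vertex-angle condition to see that the argument of the product relation is identically satisfied on $A$, for otherwise $\rank dG$ is only known to be $\ge 3|V|$ near $P(\Theta)$ and the constant rank theorem does not directly apply. The genuinely hard problem — verifying the hypothesis $\dim_{\mathbb{R}}W_X^{\mathbb{R}}=6g-6$, i.e. infinitesimal rigidity — lies outside this proposition and is addressed elsewhere in the paper.
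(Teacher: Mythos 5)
Your proof is correct and follows exactly the route the paper itself indicates at the start of Section 2 (the proposition is stated there without a separate proof): view $P(\Theta)$ as the zero set of the real-analytic system in the variable $\Re\log X$, identify $W_X^{\mathbb{R}}$ with the kernel of its Jacobian via the logarithmic derivative, and invoke the constant rank theorem once constant (maximal) rank is known on a neighborhood. Your bookkeeping that the argument of the product relation is automatically satisfied on $\{\Im\log X=\Theta\}$, so the target is $\mathbb{R}^{3|V|}$ and the hypothesis $\dim_{\mathbb{R}}W_X^{\mathbb{R}}=6g-6$ forces the map to be a submersion along $P(\Theta)$, is the right way to make this precise.
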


We shall make use of symplectic forms to study $\dim W_{X}^{\mathbb{R}}$. Indeed, as explained in Section \ref{sec:symcom}, Equations \eqref{eq:xcrproduct} and \eqref{eq:xcrsum} represent certain symplectic relations.

\section{Teichm\"{u}ller space $\Teich(S_{g,n})$ and Weil-Petersson symplectic form}

We recall the theory of decorated Teichm\"{u}ller space of punctured surfaces \cite{Penner2012}. Given a triangulation $(V,E,F)$ of a closed surface $S_g$, we can interpret it as an ideal triangulation of a punctured surface $S_{g,n}:=S_g-V$ where $n=|V|$.

We consider the Teichm\"{u}ller space $\Teich(S_{g,n})$ consisting of marked complete hyperbolic metrics with cusps at the $n$ punctures. It is known that
\[
\Teich(S_{g,n}) \cong  \mathbb{R}^{6g-6+2n}.
\]
With the triangulation fixed, the Teichm\"{u}ller space can be parametrized by \emph{positively real} cross ratios $X:E \to \mathbb{R}_{>0}$ on edges as follows. Given a complete hyperbolic metric with cusps, there is a developing map of the universal cover to the hyperbolic plane in the Poincare disk model. For any edge $ij$, we focus on one of its lifts to the universal cover, with neighbouring triangles $ijk$ and $jil$. Via the developing map, the four corners are mapped to $z_i,z_j,z_k,z_l \in S^{1} \subset \mathbb{C}$. The cross ratio 
\[
X_{ij} :=  -\frac{(z_k - z_i)(z_l -z_j)}{(z_i - z_l)(z_j - z_k)}  \in \mathbb{R}_{>0}
\]
satisfying $X_{ij}=X_{ji}$ is independent of the choice of the lift. Hence it defines $X:E \to \mathbb{R}_{>0}$. Furthermore, we have for every vertex $i$ with adjacent vertices numbered as $1$, $2$, ..., $n$ in the clockwise order counted from the link of $i$,
\begin{gather}
	\Pi_{j=1}^n X_{ij} =1  \label{eq:prodx}.
\end{gather}
Geometrically, the quantity $\log X_{ij}$ is called the shear coordinate, which is the signed hyperbolic distance between the tangency points of the geodesic $ij$ with the respective incircles in the ideal triangles $ijk$ and $jkl$. Conversely, given a function  $X:E \to \mathbb{R}_{>0}$ satisfying \eqref{eq:prodx}, one obtains a complete hyperbolic metric with cusps by gluing ideal triangles with shear coordinates $\log X$.

A tangent vector to the Teichm\"{u}ller space can be described by the logarithmic derivative of the cross ratio, i.e. $x:= \frac{d}{dt} (\log X^{(t)}|_{t=0}$ where $X^{(t)}$ represents a path in $\Teich(S_{g,n})$. In such a way, every tangent vector corresponds to an element in a real vector space
\[
W^{\mathbb{R}}:=\{ x \in \mathbb{R}^{E}| \forall i \in V,   \sum_j x_{ij}=0\}.
\]
We have an identification of the tangent space
\begin{align}\label{eq:Tteich}
	T_X \Teich(S_{g,n}) \cong W^{\mathbb{R}} \cong  \mathbb{R}^{|E|-|V|}.
\end{align}

In order to introduce the symplectic form, we consider the decorated Teichm\"{u}ller space $\widetilde{\Teich}(S_{g,n})$. It consists of decorated hyperbolic structures, each of which represents a point in $\Teich(S_{g,n})$ together with a choice of horocycle $H_i$ for each puncture $i \in V$. By considering the hyperbolic length of the horocycles at the punctures, one has
\[
\widetilde{\Teich}(S_{g,n}) = \Teich(S_{g,n}) \times \mathbb{R}^{n}_{>0}.
\]
Any decorated hyperbolic structure yields a function  $A:E \to \mathbb{R}_{>0}$ such that $\log A_{ij}$ is the signed hyperbolic distance between the horocycles $H_i$ and $H_j$. It takes positive sign whenever the horocycles are disjoint. Such functions parametrize the decorated Teichm\"{u}ller space.

Similarly, a tangent vector to the decorated Teichm\"{u}ller space can be described by the logarithmic derivative, i.e. $a:= \frac{d}{dt} (\log A^{(t)})|_{t=0}$ where $A^{(t)}$ represents a path in $\Teich(S_{g,n})$. We have an identification of the tangent space
\[
T_A \widetilde{\Teich}(S_{g,n}) \cong \{ a:E \to \mathbb{R}\} = \mathbb{R}^{E}.
\]

By forgetting the horocycles, there is a natural projection
\begin{align*}
	\widetilde{\Teich}(S_{g,n}) &\to \Teich(S_{g,n}) \\
	A_{ij} &\mapsto X_{ij} = \frac{A_{ki} A_{lj}}{A_{il} A_{jk}} 
\end{align*}
and for the tangent space
\begin{align*}
	T_A \widetilde{\Teich}(S_{g,n}) &\to T_X \Teich(S_{g,n}) \\
	a_{ij} &\mapsto  x_{ij} = a_{ki}-a_{il}+a_{lj}-a_{jk}
\end{align*}
Particularly, the linear map 
\begin{align*}
h:\mathbb{R}^{E} &\to W^{\mathbb{R}} \\
         a& \mapsto x_{ij} = a_{ki}-a_{il}+a_{lj}-a_{jk}
\end{align*}
is surjective.

\begin{theorem}[Penner]
	The pullback of the Weil-Petersson symplectic 2-form $\omega_P$ on $\Teich(S_{g,n})$ is a bilinear form on $\widetilde{\Teich}(S_{g,n})$  as
	\begin{align*}
		\tilde{\omega}_{P} &:= -2 \sum_{ijk \in F} d \log A_{ij} \wedge d \log A_{jk} + d \log A_{jk} \wedge d \log A_{ki} + d \log A_{ki} \wedge d \log A_{ij}. 
	\end{align*}
	It is  invariant under change of horocycles and invariant under edge flipping in the triangulation.
\end{theorem}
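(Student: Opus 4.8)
The plan is to reduce the statement to the standard expression of the Weil--Petersson form in terms of Penner's $\lambda$-length coordinates (the functions $A_{ij}$ being the exponentials of half the $\lambda$-lengths, up to normalization), and then to verify the two invariance claims by direct computation. First I would recall the Penner parametrization: a decorated hyperbolic structure assigns to each edge $ij$ a $\lambda$-length $\lambda_{ij}>0$, and the association $\{\lambda_{ij}\} \mapsto$ decorated structure is a real-analytic diffeomorphism $\widetilde{\Teich}(S_{g,n}) \cong \mathbb{R}^{E}_{>0}$ once the ideal triangulation is fixed. Writing $A_{ij}$ for the quantity defined in the excerpt (the exponential of the signed horocyclic distance), one has $A_{ij} = \lambda_{ij}$ up to the chosen conventions, so $d\log A_{ij}$ is, up to a universal constant, Penner's $d\log\lambda_{ij}$. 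The formula for $\tilde\omega_P$ is then precisely Penner's formula for the pullback of $\omega_P$ under the forgetful map, summed over the faces $ijk\in F$ with the cyclic orientation induced from the surface; the factor $-2$ is a normalization artifact of whether one uses $\omega_P$ or $\tfrac12\omega_P$ and of the $\lambda$-length-versus-$A$ convention, and I would fix it once and for all by comparison on a single small example (e.g. the once-punctured torus).

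Next I would establish invariance under change of horocycles. Changing the horocycle at a puncture $i$ by a factor $t_i>0$ multiplies every $A_{ij}$ incident to $i$ by the same factor, hence $d\log A_{ij} \mapsto d\log A_{ij} + d\log t_i$ for all edges at $i$. Plugging this into the sum over triangles, the extra term $d\log t_i$ appears symmetrically in the two edges of each triangle at $i$ that meet $i$, and the antisymmetrized sum of wedge products over the triangle telescopes: a computation of the type $(\alpha + \tau)\wedge\beta + \beta\wedge\gamma + \gamma\wedge(\alpha+\tau) = \alpha\wedge\beta + \beta\wedge\gamma + \gamma\wedge\alpha$ whenever one of the three one-forms is modified by a one-form $\tau$ that also modifies exactly one neighbor, because each triangle containing $i$ contributes exactly two edges at $i$. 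Summing over all triangles in the link of $i$, the $\tau$-terms cancel in pairs along the fan, using $d\log t_i\wedge d\log A \text{ for the two }i\text{-edges of a triangle}$, and one finds $\tilde\omega_P$ is unchanged. This is a short bookkeeping argument; the main point is to be careful with the cyclic orientation so signs match.

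For invariance under an edge flip, I would use the Ptolemy relation: if the edge $e=ij$ with adjacent triangles $ijk$, $jil$ is flipped to $e'=kl$, then $A_{kl}\,A_{ij} = A_{ki}A_{lj} + A_{kj}A_{il}$, and correspondingly $d\log A_{kl}$ is expressed, after logarithmic differentiation, as a convex-type combination of the $d\log$'s of the four boundary edges. The claim is then that the two-form built from the two triangles $ijk,jil$ using $d\log A_{ij}$ equals the two-form built from the two triangles $ikl,jkl$ using $d\log A_{kl}$, with the four outer edges' contributions matching. This is exactly the local form of Penner's theorem that $\tilde\omega_P$ does not depend on the ideal triangulation, and I would carry out the substitution and simplify; the Ptolemy relation plus the identity $d\log(p+q) = \frac{p}{p+q}d\log p + \frac{q}{p+q}d\log q$ makes all cross-terms collapse. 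I expect this edge-flip computation to be the main obstacle, not conceptually but because it is the one place where a genuine (if standard) algebraic identity among five $\lambda$-lengths must be pushed through; everything else is formal. Having verified both invariances and the normalization on one example, the theorem follows, since any two ideal triangulations are connected by a finite sequence of flips.
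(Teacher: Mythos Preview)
The paper does not contain a proof of this theorem: it is stated as a named result attributed to Penner, with a citation to \cite{Penner2012}, and is used as input for the subsequent Corollary~\ref{cor:symppair} and Definition~\ref{def:wg}. So there is no ``paper's own proof'' to compare against.

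Your proposal is a reasonable outline of the standard Penner argument. Two small remarks. First, your horocycle-invariance sketch is correct in outcome but slightly misstated: in a triangle $ijk$ containing the puncture $i$, \emph{two} of the three edge one-forms pick up the perturbation $\tau=d\log t_i$, not one; the extra contribution from that triangle is $\tau\wedge d\log(A_{ij}/A_{ki})$, which does not vanish alone but telescopes to zero when summed over the triangles in the link of $i$. Second, your plan to ``fix the normalization on one example'' is not really a proof that the form equals the pullback of $\omega_P$; that identification is the actual content of Penner's theorem and requires either his explicit computation via horocyclic arc lengths or an appeal to the known Fenchel--Nielsen/Wolpert expression. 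The invariance checks you outline show only that $\tilde\omega_P$ is a well-defined mapping-class-group invariant $2$-form on $\widetilde{\Teich}(S_{g,n})$ descending to $\Teich(S_{g,n})$, not that it coincides with the Weil--Petersson form up to scale; a one-example comparison pins down a constant only once you already know the two forms are proportional.
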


For later calculations, we express the 2-form in another way.
\begin{corollary}\label{cor:symppair}
	Suppose $x,\tilde{x} \in T_X \Teich(S_{g,n})$ are two tangent vectors and we consider any of their lift $a,\tilde{a} \in T_A\widetilde{\Teich}(S_{g,n})$. Then
	\[
	\tilde{\omega}_{P}(a,\tilde{a}) =   2\sum_{ij \in E} a_{ij} \tilde{x}_{ij} = -2 \sum_{ij \in E} \tilde{a}_{ij} x_{ij}
	\]
\end{corollary}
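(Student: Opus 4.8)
The plan is to reduce the identity to a purely combinatorial rearrangement of Penner's formula for $\tilde\omega_P$ and then bookkeep the edge contributions around each triangle. First I would write $\tilde\omega_P$ evaluated on the two lifts $a,\tilde a$ using Penner's theorem, obtaining
\[
\tilde\omega_P(a,\tilde a) = -2\sum_{ijk\in F}\bigl( da_{ij}\wedge da_{jk} + da_{jk}\wedge da_{ki} + da_{ki}\wedge da_{ij}\bigr)(a,\tilde a),
\]
where each $(d\log A_{ij}\wedge d\log A_{jk})(a,\tilde a) = a_{ij}\tilde a_{jk} - \tilde a_{ij}a_{jk}$. The goal is to collect, for a fixed oriented edge $ij$ shared by the two triangles $ijk$ and $jil$, all the terms in which $a_{ij}$ (resp. $\tilde a_{ij}$) appears, and to recognize the coefficient as exactly $\pm\tilde x_{ij}$ (resp. $\pm x_{ij}$) via the formula $x_{ij} = a_{ki} - a_{il} + a_{lj} - a_{jk}$ from the projection on tangent spaces.

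The key steps, in order: (1) Expand the sum over faces so that the ambient double sum runs over ordered pairs of adjacent edges within each triangle; each wedge $d\log A_{e}\wedge d\log A_{e'}$ contributes two monomials, one linear in $a$ and one linear in $\tilde a$. (2) Fix an edge $\{ij\}$ and track the four face-corners adjacent to it — two in triangle $ijk$ and two in triangle $jil$ — to read off which neighbours $k,l$ and which incident edges $ki, jk, il, lj$ pick up $a_{ij}$ as a partner in a wedge, paying careful attention to cyclic orientation of each face so the signs are consistent. (3) Assemble these four contributions; the orientation conventions should make them combine into $a_{ij}\cdot 2(\tilde a_{ki} - \tilde a_{il} + \tilde a_{lj} - \tilde a_{jk}) = 2 a_{ij}\tilde x_{ij}$ after using the factor $-2$ out front and the antisymmetry of the wedge. (4) Summing over all edges gives $\tilde\omega_P(a,\tilde a) = 2\sum_{ij\in E} a_{ij}\tilde x_{ij}$; the second equality $= -2\sum \tilde a_{ij} x_{ij}$ follows either by the same computation with the roles of $a,\tilde a$ swapped together with antisymmetry of $\tilde\omega_P$, or by observing that $\sum_{ij}(a_{ij}\tilde x_{ij} + \tilde a_{ij} x_{ij}) = \sum_{ij} d(\text{something})$ vanishes because it is $\tfrac{1}{2}\,d$ of a symmetric expression — but the cleanest route is simply antisymmetry of $\tilde\omega_P$ combined with the first identity applied to $(\tilde a, a)$.

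I expect the main obstacle to be the sign bookkeeping in step (2)–(3): one must be scrupulous about the cyclic (clockwise, matching the paper's link convention) ordering of the vertices in each of the two triangles sharing $\{ij\}$, since reversing a face orientation flips the sign of its three wedge terms, and an off-by-a-sign error would produce $x_{ij}$ with the wrong pattern of $\pm a_{ki}$ versus $\pm a_{jk}$. A useful sanity check is to verify the formula on a single triangulated torus or on one interior edge with its two triangles in isolation, confirming that the four partner-terms assemble to precisely $a_{ki} - a_{il} + a_{lj} - a_{jk}$ and not some other signed combination; once the local picture around one edge is pinned down, globalization is immediate since every edge is interior (the surface is closed) and each appears in exactly two faces.
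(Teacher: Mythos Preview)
Your proposal is correct and follows essentially the same approach as the paper: expand Penner's face-sum as $-2\sum_{ijk\in F}\bigl(a_{ij}(\tilde a_{jk}-\tilde a_{ki})+a_{jk}(\tilde a_{ki}-\tilde a_{ij})+a_{ki}(\tilde a_{ij}-\tilde a_{jk})\bigr)$, then regroup by edges so that the two faces adjacent to $\{ij\}$ contribute coefficient $-\tilde x_{ij}$ to $a_{ij}$, and the second equality follows by antisymmetry. The paper does this in one line; your steps (1)--(4) spell out exactly the same rearrangement with more commentary on the sign bookkeeping.
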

\begin{proof}
	It follows from rewriting
	\begin{align*}
		\tilde{\omega}_{P}(a,\tilde{a}) &=-2 \sum_{ijk \in F} a_{ij} (\tilde{a}_{jk}-\tilde{a}_{ki}) + a_{jk} (\tilde{a}_{ki}-\tilde{a}_{ij}) + a_{ki} (\tilde{a}_{ij}-\tilde{a}_{jk}) 
	\end{align*}
with $x_{ij}=a_{ki}-a_{il}+a_{lj}-a_{jk}$.
\end{proof}

Motivated by the symplectic form on the Teichm\"{u}ller space $\Teich(S_{g,n})$, we equip the vector spaces  $W_{X}^{\mathbb{C}}$ and  $W_{X}^{\mathbb{R}}$ with an analogous skew-symmetric bi-linear form. We consider a complex vector space
\[
W^{\mathbb{C}}:=\{ x \in \mathbb{C}^{E} | \forall i \in V, \quad \sum_{j} x_{ij}=0 \}
\]
which is the complexification of $W^{\mathbb{R}}$ in Equation \eqref{eq:Tteich}. It includes $W_{X}^{\mathbb{C}}$ as a subspace.

\begin{definition}\label{def:wg}
	We define a skew-symmetric bi-linear forms $\widetilde{\omega}_{P}^{\mathbb{C}}$ on $\mathbb{C}^{E}$ as follows: for any $a, \tilde{a} \in \mathbb{C}^{E}$
	\[
	\widetilde{\omega}_{P}^{\mathbb{C}}(a,\tilde{a}):= -2 \sum_{ijk \in F} a_{ij} (\tilde{a}_{jk}-\tilde{a}_{ki}) + a_{jk} (\tilde{a}_{ki}-\tilde{a}_{ij}) + a_{ki} (\tilde{a}_{ij}-\tilde{a}_{jk}).
	\]
	Via the surjective mapping
	\begin{align*}
		h: \mathbb{C}^E &\to W^{\mathbb{C}}\\
		a_{ij} &\mapsto  x_{ij} = a_{ki}-a_{il}+a_{lj}-a_{jk}
	\end{align*}
	we define a skew-symmetric bilinear form 	$\omega_{P}^{\mathbb{C}}$ on $W^{\mathbb{C}}$ as follows: $\forall x,\tilde{x} \in W^{\mathbb{C}}$
	\[
	\omega_{P}^{\mathbb{C}}(x,\tilde{x}) := \widetilde{\omega}_{P}^{\mathbb{C}}(a,\tilde{a})
	\]
	where $a \in h^{-1}(x)$, $\tilde{a} \in h^{-1}(\tilde{x})$. The bilinear form $\omega_{P}^{\mathbb{C}}$ is well defined and independent of the choice of the preimages $a,\tilde{a}$. Moreover, $\omega_{P}^{\mathbb{C}}$ is non-degenerate on $W^{\mathbb{C}}$ and $(W^{\mathbb{C}},\omega_{P}^{\mathbb{C}})$ is a symplectic vector space. 
\end{definition}

Indeed, the non-degeneracy of $\omega_{P}^{\mathbb{C}}$ follows from that of the Weil-Petersson symplectic form $\omega_P$. We would like to explore how $\omega^{\mathbb{C}}_P$ behaves when restricted to the $W^{\mathbb{C}}_X$ and $W^{\mathbb{R}}_X$. Particularly, $\omega^{\mathbb{C}}_P|_{W^{\mathbb{R}}_X}$ is induced from the symplectic form on $\Teich(S_{g,n})$.

\begin{proposition}
	For any fixed Delaunay angle structure $\Theta$, the mapping from the space of circle patterns with prescribed intersection angles $\Theta$ to the Teichm\"{u}ller space of punctured surface 
	\begin{align*}
		\tilde{f}: P(\Theta) &\to \Teich(S_{g,n}) \\
		X &\mapsto |X|
	\end{align*}
	is injective. Furthermore, it induces a natural inclusion for every $X \in P(\Theta)$
	\begin{align*}
	i: W^{\mathbb{R}}_{X} &\xhookrightarrow{} T_{|X|}\Teich(S_{g,n})\\
		x &\mapsto x
\end{align*}
	with $i^*\omega_{P} = \omega^{\mathbb{C}}_P|_{W^{\mathbb{R}}_X}$.
\end{proposition}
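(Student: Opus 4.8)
The plan is to establish the three assertions in order, the first two being formal consequences of the definitions and the last a direct computation from Penner's formula together with Corollary~\ref{cor:symppair}. First I would check that $\tilde f$ is well defined and injective. For $X \in P(\Theta)$ the function $|X|:E\to\mathbb{R}_{>0}$ satisfies $\prod_j|X_{ij}| = \bigl|\prod_j X_{ij}\bigr| = 1$ by \eqref{eq:crproduct}, so $|X|$ determines a point of $\Teich(S_{g,n})$ through the shear-coordinate parametrization. Injectivity is then immediate: because $\Theta$ takes values in $[0,\pi)$ the branch of the logarithm is pinned down, so $\log X = \log|X| + i\Theta$ and $X$ is recovered from the pair $(|X|,\Theta)$; hence $|X| = |X'|$ with $X,X'\in P(\Theta)$ forces $X = X'$.

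Next I would construct the inclusion $i$. Any $x\in W^{\mathbb{R}}_X$ satisfies, among the linearized equations of Definition~\ref{def:wc}, the vertex relations \eqref{eq:xcrproduct}, i.e. $\sum_j x_{ij} = 0$ for every $i\in V$, and is moreover real; therefore $x$ belongs to the space $W^{\mathbb{R}}$ of \eqref{eq:Tteich}, which is identified with $T_{|X|}\Teich(S_{g,n})$. This assignment is literally the set-theoretic inclusion $W^{\mathbb{R}}_X\subset W^{\mathbb{R}}$, so it is injective, and it coincides with the differential of $\tilde f$: differentiating $\tilde f(X^{(t)}) = |X^{(t)}|$ along a deformation with logarithmic derivative $x$ yields $\frac{d}{dt}\log|X^{(t)}|\big|_{t=0} = \Re x = x$, where the last equality holds because elements of $W^{\mathbb{R}}_X$ are exactly the linearizations of $\Arg$-preserving deformations.

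Finally I would compare the two bilinear forms on $W^{\mathbb{R}}_X$. Given $x,\tilde x\in W^{\mathbb{R}}_X$, choose real lifts $a,\tilde a\in\mathbb{R}^E$ with $h(a)=x$, $h(\tilde a)=\tilde x$. On one side, since the form $\tilde\omega_P$ of Penner is the pullback of $\omega_P$ under $\widetilde{\Teich}(S_{g,n})\to\Teich(S_{g,n})$, Corollary~\ref{cor:symppair} gives
\[
i^*\omega_P(x,\tilde x) = \omega_P(x,\tilde x) = \tilde\omega_P(a,\tilde a) = 2\sum_{ij\in E} a_{ij}\,\tilde x_{ij}.
\]
On the other side, by Definition~\ref{def:wg}, $\omega^{\mathbb{C}}_P(x,\tilde x) = \widetilde{\omega}_{P}^{\mathbb{C}}(a,\tilde a)$, and the algebraic manipulation in the proof of Corollary~\ref{cor:symppair} is valid verbatim over $\mathbb{C}$, so $\widetilde{\omega}_{P}^{\mathbb{C}}(a,\tilde a) = 2\sum_{ij\in E} a_{ij}\,\tilde x_{ij}$ as well. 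The two sides agree, giving $i^*\omega_P = \omega^{\mathbb{C}}_P|_{W^{\mathbb{R}}_X}$.

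I do not anticipate a genuine obstacle; the only point needing care is conceptual rather than computational. Namely, $W^{\mathbb{R}}_X$ is the formal kernel of the linearized cross-ratio system and is not a priori the geometric tangent space of $P(\Theta)$ — that is precisely what Conjecture~\ref{con:kmt} (and its consequence in Corollary~\ref{cor:infimplysmooth}) addresses. This causes no trouble here, because $\omega^{\mathbb{C}}_P$ is defined on all of $W^{\mathbb{C}}\supset W^{\mathbb{C}}_X$ by the explicit formula of Definition~\ref{def:wg}, so restricting it to $W^{\mathbb{R}}_X$ and proving the identity above are purely linear-algebraic operations, independent of whether $P(\Theta)$ happens to be smooth at $X$.
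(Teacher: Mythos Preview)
Your proposal is correct and, for the injectivity of $\tilde f$, uses essentially the same argument as the paper---recovering $X$ from the pair $(|X|,\Theta)$ since $\arg X = \Theta$ is fixed. The paper's own proof in fact only spells out injectivity and leaves the inclusion $i$ and the identity $i^*\omega_P = \omega^{\mathbb{C}}_P|_{W^{\mathbb{R}}_X}$ implicit (they are immediate from Definition~\ref{def:wg} and Corollary~\ref{cor:symppair}), so your treatment is more complete than the paper's.
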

\begin{proof}
	Suppose $\tilde{f}(X)=\tilde{f}(X')$ for some $X,X' \in P(\Theta)$. It implies $|X_{ij}|=|X'_{ij}|$ for all edges $ij$. On the other hand, $\arg X_{ij} = \Theta_{ij} = \arg X_{ij}'$. Thus $X_{ij}= X_{ij}'$ for all edges. Hence $\tilde{f}$ is an embedding.
\end{proof}

\begin{remark}
	Unlike the real cross ratios in the decorated Teichm\"{u}ller theory, generally given a complex cross ratio $X \in P(\Theta)$, there might be no $A:E \to \mathbb{C}$ such that 
	\begin{align}\label{eq:factorization}
		X_{ij} = \frac{A_{ki} A_{lj}}{A_{il} A_{jk}}.
	\end{align}
	An example is the one-vertex triangulation of the torus, where the factorization \eqref{eq:factorization} fails to exist. It is due to the fact that the complex cross ratios satisfy for every vertex $i$, 
	\[
	\sum_j \log X_{ij} = 2 \pi \sqrt{-1}
	\]
	where the right hand side is $2 \pi \sqrt{-1}$ instead of $0$.
\end{remark}

\section{Space of projective structures $P(S_g)$}\label{sec:proj}

 Following the survey \cite{Dumas2009}, we recall the space $P(S_g)$ of marked complex projective structures on a closed surface. We then introduce osculating M\"{o}bius transformations on circle patterns in order to pull back the symplectic form on $P(S_g)$ to $W^{\mathbb{C}}_X$.

\begin{definition}
	A complex projective structure on a closed surface $S_{g}$ is a maximal atlas of charts from open subsets of $S_{g}$ to the Riemann sphere such that the transition functions are restrictions of M\"{o}bius transformations. These charts are called projective charts.
	
	Two complex projective structures are marked isomorphic if there is a diffeomorphism homotopic to the identity mapping projective charts to projective charts. We denote $P(S_{g})$ the space of marked complex projective structures up to isomorphism.
\end{definition}
The topology of $P(S_g)$ is well-studied and
\[
P(S_g) \cong \mathbb{C}^{6g-6} \cong \mathbb{R}^{12g -12}.
\]
It includes Fuchsian structures (i.e. hyperbolic structures) and quasi-Fuchsian structures. Every complex projective structure $\sigma$ induces a developing map of the universal cover to the Riemann sphere whose holonomy yields a representation \[\rho \in \Hom( \pi_1(S_g), PSL(2,\mathbb{C})).\] 
Particularly, it satisfies for $\gamma_1,\gamma_2 \in \pi_1(S_g)$ 
\begin{equation}\label{eq:holrep}
	\rho_{\gamma_1 \gamma_2} = \rho_{\gamma_1} \rho_{\gamma_2}
\end{equation}
For a given complex projective structure, the developing map is unique up to a M\"{o}bius transformation while the holonomy representation is unique up to conjugation. Here two holonomy representations $\rho$ and $\tilde{\rho}$ are conjugate if there exists a constant $g\in PSL(2,\mathbb{C})$ such that for $\gamma \in \pi_1(S_g)$
\[
\tilde{\rho}_{\gamma} = g \rho_{\gamma} g^{-1}.
\]
We define the character variety 
\[
\mathcal{X}(S_g):= \Hom( \pi_1(S_g), PSL(2,\mathbb{C}))\sslash PSL(2,\mathbb{C}) 
\]
and the holonomy map
\[
\mbox{Hol}: P(S_g) \to \mathcal{X}(S_g).
\]
\begin{theorem}(Hejhal [47], Earle [28], Hubbard [51]). The holonomy map $\mbox{Hol} : P(S_g) \to \mathcal{X}(S_g)$ is a local biholomorphism.
\end{theorem}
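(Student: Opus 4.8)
The plan is to prove this in three steps: show that $\mbox{Hol}$ is holomorphic, show that $\mbox{Hol}$ is a local homeomorphism, and combine the two, using the classical fact that a locally injective holomorphic map between equidimensional complex manifolds is a biholomorphism onto its (open) image.

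First I would check holomorphicity. Fix $\sigma\in P(S_g)$ with underlying Riemann surface $R$; via the Schwarzian derivative the projective structures with underlying surface $R$ form an affine space over $H^0(R,K_R^{\otimes 2})$, and letting $R$ range over $\Teich(S_g)$ realizes $P(S_g)$ as a holomorphic affine bundle over $\Teich(S_g)$, so $\dim_{\mathbb C}P(S_g)=(3g-3)+(3g-3)=6g-6$. The developing map attached to a Schwarzian $q$ is obtained by integrating the linear ODE $u''+\tfrac12\,q\,u=0$ and taking the ratio of two independent solutions; its monodromy, which projectivizes to the holonomy representation into $PSL(2,\mathbb C)$, depends holomorphically on $q$ and on the complex-structure parameters. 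Since $\mathcal{X}(S_g)$ near the holonomy of a projective structure --- non-elementary for $g>1$, hence a smooth point of complex dimension $6g-6$ --- is coordinatized by trace functions polynomial in matrix entries, $\mbox{Hol}$ is holomorphic.

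Next comes the local homeomorphism property, which is the Ehresmann--Thurston deformation principle for $(PSL(2,\mathbb C),\mathbb{CP}^1)$-structures on the closed surface $S_g$. Choosing a finite good cover of $S_g$ together with developing data on the charts and M\"obius transition maps realizing $\sigma$, one deforms the transition maps to follow a nearby representation $\rho_t$ of $\pi_1(S_g)$; the deformed data still glue to a projective structure $\sigma_t$ with $\mbox{Hol}(\sigma_t)=[\rho_t]$, which gives openness. For local injectivity, if a path $\sigma_t$ has constant holonomy then its developing maps $\dev_t$ are all equivariant for the same $\rho_0$, so $\dev_t\circ\dev_0^{-1}$ induces, equivariantly and hence on $S_g$, an isotopy realizing a marked isomorphism $\sigma_t\cong\sigma_0$. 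Infinitesimally this says $\Ker d\,\mbox{Hol}_\sigma=0$: an infinitesimal deformation with vanishing infinitesimal holonomy is tangent to the orbit of ambient M\"obius maps and of isotopies of $S_g$, and since for $g>1$ there are no nonzero global projective vector fields on $S_g$ (such a field would be a holomorphic vector field on the uniformizing hyperbolic surface, hence $0$) it is the trivial deformation. With $\dim_{\mathbb C}T_\sigma P(S_g)=\dim_{\mathbb C}H^1_{\Ad\rho}(\pi_1(S_g),sl(2,\mathbb C))=6g-6$, this forces $d\,\mbox{Hol}_\sigma$ to be a linear isomorphism, and the holomorphic inverse function theorem then finishes the proof.

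The hard part is the local-injectivity half of the Ehresmann--Thurston step: ruling out a nontrivial deformation of a closed-surface projective structure with trivial holonomy. A purely analytic alternative, following Hubbard, is to compute $d\,\mbox{Hol}$ in Schwarzian coordinates --- its vertical component sends $\dot q\in H^0(R,K_R^{\otimes 2})$ to the Eichler periods of $\dot q$ paired with squares of solutions of $u''+\tfrac12\,q\,u=0$, and a Riemann--Roch / Serre-duality argument shows these periods cannot all vanish unless $\dot q=0$ --- with the Teichm\"uller directions handled separately. Either way this is where the content lies; holomorphicity of $\mbox{Hol}$ and the dimension count are routine.
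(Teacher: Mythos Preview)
The paper does not prove this theorem at all: it is stated as a classical result, attributed to Hejhal, Earle, and Hubbard, and used as background in the discussion of $P(S_g)$. There is therefore no ``paper's own proof'' to compare against.

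Your sketch is a reasonable outline of the standard argument and hits the right ingredients: the Schwarzian parametrization of $P(S_g)$ as an affine bundle over $\Teich(S_g)$, holomorphic dependence of monodromy on the coefficient $q$ of the linear ODE $u'' + \tfrac{1}{2}qu = 0$, the dimension match $\dim_{\mathbb{C}} P(S_g) = \dim_{\mathbb{C}} H^1_{\Ad\rho}(\pi_1(S_g), sl(2,\mathbb{C})) = 6g-6$, and the Ehresmann--Thurston deformation principle for openness. You are also right to flag local injectivity as the substantive step. One caution: your quick argument for local injectivity (``$\dev_t\circ\dev_0^{-1}$ induces an isotopy on $S_g$'') is not quite a proof as stated, since $\dev_0$ need not be injective, so $\dev_0^{-1}$ is not globally defined; one must work more carefully on the universal cover and use that the developing maps are local diffeomorphisms and $\rho$-equivariant to produce a self-diffeomorphism of $\tilde{S}$ commuting with the deck group. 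The Eichler-integral / period-map alternative you mention is closer to what Hubbard and Earle actually do, and is cleaner for establishing that $d\,\mbox{Hol}$ is injective fiberwise.
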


By results of Goldman \cite{Goldman1984}, the tangent space of the character variety, and thus the tangent space of $P(S_g)$, are identified with the group cohomology. When one has a 1-parameter family of complex projective structures with holonomy $\rho^{(t)}$ satisfying $ \rho= \rho^{(t)}|_{t=0}$ and  $\dot{\rho}:= \frac{d}{dt}\rho^{(t)} |_{t=0}$, Equation \eqref{eq:holrep} implies that the mapping \[ \tau:= \dot{\rho} \rho^{-1} : \pi_1(S_g) \to sl(2,\mathbb{C})\]  satisfies a \textit{cocycle condition}: for every $\gamma_1,\gamma_2 \in \pi_1(S_g)$
\begin{align}\label{eq:cocycle}
	\tau_{\gamma_1 \gamma_2}=  \tau_{\gamma_1} + \Ad \rho_{\gamma_1} (\tau_{\gamma_2}).
\end{align}
where 
\[
\Ad \rho_{\gamma_1} (\tau_{\gamma_2})  :=  \rho_{\gamma_1} \tau_{\gamma_2} \rho^{-1}_{\gamma_1}.
\]
We write $Z^{1}_{\Ad \rho}(\pi_1(S_g),sl(2,\mathbb{C})$ the space of cocycles, which are functions $\tau: \pi_1(S_g) \to sl(2,\mathbb{C})$ satisfying the cocycle condition (Equation \eqref{eq:cocycle}). This space contains a subspace $B^{1}_{\Ad \rho}(\pi_1(S_g),sl(2,\mathbb{C}))$ of \textit{coboundaries}, which are functions  $\tau: \pi_1(S_g) \to sl(2,\mathbb{C})$ in the form
\[
\tau_\gamma = \tau_0- \Ad \rho_{\gamma} (\tau_0) 
\]
for some constant $\tau_0 \in sl(2,\mathbb{C})$. The coboundaries correspond to the trivial change of holonomy induced from conjugation. Indeed, suppose $g^{(t)} \in SL(2,\mathbb{C})$ is a path with $\mbox{Id}= g^{(t)}|_{t=0}$ and $\tau_0= \frac{d}{dt} g^{(t)}|_{t=0}$. By conjugation with $g^{(t)}$, we obtain a 1-parameter family of holonomy $\rho^{(t)}$ yielding for $\gamma \in \pi_1$
\[
\tau_{\gamma}= \left(\frac{d}{dt} \rho^{(t)}_{\gamma}|_{t=0}\right) \rho^{-1}_{\gamma} = \tau_0 - \Ad \rho_{\gamma} (\tau_0).
\] 
and hence $\tau:\pi_1(S_g) \to sl(2,\mathbb{C})$ is a coboundary. We then have an identification of the tangent space of $P(S_g)$ with the group cohomology defined as the quotient space
\[
T_{\sigma}P(S_g)\cong H^1_{\Ad\rho}(\pi_1(S_g),sl(2,\mathbb{C})) := \frac{Z^{1}_{\Ad\rho}(\pi_1(S_g),sl(2,\mathbb{C}))}{B^{1}_{\Ad \rho}(\pi_1(S_g),sl(2,\mathbb{C}))}
\]
 where $\rho$ is the holonomy map associated to the complex projective structure $\sigma$.
 
  We shall express the Goldman's symplectic form on $P(S_g)$ in terms of the wedge product in de Rham cohomology. We refer \cite{Labourie2013} for more detailed discussion.
 
 We denote  $\tilde{S}$ the universal cover of $S_g$ equipped with a complex projective structure by lifting $\sigma$. We construct a flat vector bundle $F_{\rho}$ over $(S_g,\sigma)$. 
 \[
 F_{\rho} := (\tilde{S} \times sl(2,\mathbb{C}))/ \pi_1(S_g)\] 
 where $\pi_1(S_g)$ acts on $ \tilde{S} \times  sl(2\,\mathbb{C})$ by
 \[
 \gamma \cdot (p,x) = (\gamma(p), \text{Ad}(\rho_{\gamma})(x)).
 \]
 for $\gamma \in \pi_1(S_g)$ where $\gamma (p)$ acts via the deck transformation. A lift of a $F_{\rho}$-valued 1-form on $S_g$ to the universal cover is a $sl(2,\mathbb{C})$-valued 1-form $\alpha$ satisfying
 \[
 \alpha \circ \gamma= \Ad \rho_{\gamma} ( \alpha)
 \]
 We write $Z^1(S_g,F_{\rho})$ the space of closed $F_{\rho}$-valued 1-forms. It contains the subspace $B^1_{\text{dR}}(S_g,F_{\rho})$ of exact 1-forms. We then denote $H^1_{\text{dR}}(S_g,F_{\rho})$ the de Rham cohomology on the surface $S_g$ with values in $F_{\rho}$ as the quotient space
 \[
 H^1_{\text{dR}}(S_g,F_{\rho}) := \frac{Z^1_{\text{dR}}(S_g,F_{\rho})}{B^1_{\text{dR}}(S_g,F_{\rho})}
 \]
 It is known that there is an isomorphism 
 \begin{align*}
 	\Phi:H^1_{\Ad \rho}(\pi_1(S_g),sl(2,\mathbb{C})) &\to H^1_{\text{dR}}(S_g,F_{\rho})
 \end{align*}
 given via the periods along loops. To illustrate this mapping, let $p\in \tilde{S}$ be a lift of the base point for the fundamental group. Let $\alpha$ be the lift of a closed $F_{\rho}$-valued 1-form. Then one defines $\tau:\pi_1(S_g) \to sl(2,\mathbb{C})$ such that for $\gamma \in \pi_1(S_g)$
 \[
 \tau_{\gamma}= \int_{p}^{\gamma (p)} \alpha.
 \]
 where the line integral on the universal cover is independent of path. For any $\gamma_1,\gamma_2 \in \pi_1(S_g)$, one has
 \[
 \tau_{\gamma_1 \gamma_2}=  \int_{p}^{\gamma_1 (p)} \alpha + \int_{p}^{\gamma_2 (p)} \alpha\circ \gamma_1=\tau_{\gamma_1 }+ \Ad\rho_{\gamma_1} (\tau_{\gamma_2})
 \]
 It yields that $\tau$ is a cocycle. If $\alpha$ is changed by a $F_{\rho}$-valued exact 1-form or another lift of the base point is used, then $\tau$ differs by a coboundary. Thus the isomorphism is defined over the quotient space such that $\Phi([\tau])=[\alpha]$. 
 
 On the other hand, since the universal cover is simply connected, one can consider a primitive function of the closed 1-form, namely $m:\tilde{S} \to sl(2,\mathbb{C})$ defined by
 \[
 m(x) := \int_p^x \alpha.
 \]
 Observe that $m(p)=0$ and
 \begin{align*}
 	\tau_{\gamma} =& m( \gamma(p)) - \Ad \rho_{\gamma} (m(p)) \\=& m( \gamma(x)) - \Ad \rho_{\gamma} (m(x)) - \int_{p}^{x} \left( \alpha\circ \gamma - \Ad \rho_{\gamma}(\alpha) \right) \\=&m( \gamma(x)) - \Ad \rho_{\gamma} (m(x))
 \end{align*}
 which remains constant for all $x \in \tilde{S}$.
 
 With the isomorphism $\Phi$, Goldman's symplectic form is defined such that for $\tau, \tilde{\tau} \in H^1_{\Ad \rho}(\pi_1(S_g),sl(2,\mathbb{C}))$
 \[
 \tilde{\omega}_G([\tau],[\tilde{\tau}])=  \iint_{S_g} \tr( \Phi(\tau) \wedge \Phi(\tilde{\tau}) ).
 \]
 We shall expand the integral to verify that indeed it is expressed in terms solely of $\tau$ and $\tilde{\tau}$. The expression will be needed for the proof of the main theorem. Consider a fundamental domain $\mathcal{F}$ on the universal cover by cutting $S_g$ along generators $\gamma_1,\gamma_2,\dots,\gamma_{2g} \in \pi_1(S_g)$ such that
 \[
 \gamma_1 \circ \gamma_2 \circ \gamma_1^{-1} \circ \gamma_2^{-1} \dots \gamma_{2g-1} \circ \gamma_{2g} \circ \gamma_{2g-1}^{-1} \circ \gamma_{2g}^{-1} =1 \in \pi_1(S_g).
 \]
 Then $\mathcal{F}$ is a 4g-polygon with sides matched in distinct pairs, i.e. the boundary is written as
 \[
 \partial \mathcal{F} = \tilde{\gamma}_1 + \tilde{\gamma}_2 + \tilde{\gamma}'_1 + \tilde{\gamma}'_2 + \dots \tilde{\gamma}_{2g-1} + \tilde{\gamma}_{2g} + \tilde{\gamma}'_{2g-1} + \tilde{\gamma}'_{2g}
 \]
 where $ \tilde{\gamma}_r$ and  $\tilde{\gamma}'_r$ are some lifts of the loops $\gamma_r$ and $\gamma_r^{-1}$ to the universal cover. For $r=1,2,\dots,2g$, there is a unique $\delta_r \in  \pi_1 (S_g)$ carrying a side $ \tilde{\gamma}_r$ to the paired side $ \tilde{\gamma}_r'$ reversing the orientation via a deck transformation. We write $\alpha$, $\tilde{\alpha}$ the lifts of the closed 1-forms representing $\Phi(\tau)$ and $\Phi(\tilde{\tau})$. Let $m:\tilde{S} \to sl(2,\mathbb{C})$ be a primitive function of $\alpha$. With these, we expand the integral using Stokes' theorem
 \begin{align}\label{eq:wgexplicit}
 	\begin{split}
 		\tilde{\omega}_G(\tau, \tilde{\tau})  &=  \int_{\partial F_g}  \tr (m \tilde{\alpha}) \\
 		&= \sum_{r=1}^{2g}\left( \int_{\gamma_r} \tr( m\tilde{\alpha}) -  \int_{\gamma_r} \tr( m\circ \delta_r \cdot \tilde{\alpha}\circ \delta_r) \right) \\
 		&= \sum_{r=1}^{2g}\left( \int_{\gamma_r} \tr( \Ad \rho_{\delta_r} (m) \cdot \Ad \rho_{\delta_r} (\tilde{\alpha})) -  \int_{\gamma_r} \tr( m\circ \delta_r \cdot \tilde{\alpha}\circ \delta_r) \right) \\ 
 		&= \sum_{r=1}^{2g}\left( \int_{\gamma_r} \tr( \Ad \rho_{\delta_r} (m) \cdot \tilde{\alpha} \circ \delta_r) -  \int_{\gamma_r} \tr( m\circ \delta_r \cdot \tilde{\alpha}\circ \delta_r) \right) \\
 		&= -\sum_{r=1}^{2g}\tr\left( (m\circ \delta_r - \Ad \rho_{\delta_r} (m))    (\int_{\gamma_r} \tilde{\alpha} \circ \delta_r) \right) \\
 		&= -\sum_{r=1}^{2g}\tr\left( \tau_{\delta_r}  (\int_{\gamma_r} \tilde{\alpha} \circ \delta_r) \right)
 	\end{split}
 \end{align}
 where we used the fact that $\left(m\circ \delta_r - \Ad \rho_{\delta_r} (m)\right)$ is a constant function on the universal cover and equal to $ \tau_{\delta_r}$. On the other hand, the line integral 
 \[
 \int_{\gamma_r} \tilde{\alpha} \circ \delta_r
 \]
 is the evaluation of $\tilde{\tau}$ at some element in $\pi_1(S_g)$. It is known that both $\Re(\tilde{\omega}_G)$ and $\Im(\tilde{\omega}_G)$ define real symplectic forms on $P(S_g)$ and in particular, non-degenerate.

\subsection{Change of holonomy via osculating M\"{o}bius transformations}

Every cross ratio $X\in P(\Theta)$ induces a developing map of the triangulation of the universal cover. Similarly, every element in $ W^{\mathbb{C}}_X$ induces an infinitesimal deformation of the developing map and thus a change in the holonomy. We explain the connection by introducing osculating M\"{o}bius transformation for a pair of circle patterns. Osculation M\"{o}bius transformations was used in \cite{Lam2024} to construct discrete constant-mean-curvature-1 surfaces in hyperbolic 3-space.

\begin{proposition}
	Given a cross ratio system $X:E \to \mathbb{C}$. We denote $(\hat{V},\hat{E},\hat{F})$ the pullback triangulation on the universal cover and lift $X$ to the universal cover such that it is invariant under deck transformations.  Then there exists a \emph{developing map} of the triangulation $z:\hat{V} \to \mathbb{C}\cup \{ \infty \}$ such that for every edge $ij \in \hat{E}$ shared by triangles $\{ijk\}$ and $\{jil\}$
	\begin{equation}\label{eq:developingmap}
		X_{ij} = -\frac{(z_k - z_i)(z_l -z_j)}{(z_i - z_l)(z_j - z_k)}.
	\end{equation}
	The developing map is unique up to complex projective transformations and defines the same holonomy representation from the underlying complex projective structure.
\end{proposition}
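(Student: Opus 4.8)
The plan is to construct $z$ by propagating the cross-ratio relation \eqref{eq:developingmap} triangle by triangle over $(\hat V,\hat E,\hat F)$, to prove that the propagation is path-independent using that the universal cover $\hat S$ is simply connected together with the vertex equations \eqref{eq:crproduct}--\eqref{eq:crsum}, and then to read off uniqueness and the holonomy. Throughout I work in the Riemann sphere $\mathbb{C}\cup\{\infty\}$ with $PSL(2,\mathbb{C})$ acting, so that all formulas remain meaningful when a point hits $\infty$. First note that \eqref{eq:crproduct} forces $X_{ij}\neq 0$ for every edge. \emph{Extension step:} if $z$ is already defined, with three distinct values, on a triangle $\{ijk\}$ and $\{jil\}$ is the adjacent triangle across $ij$, then \eqref{eq:developingmap} is a single linear equation for $z_l$ on $\mathbb{C}\cup\{\infty\}$ with the unique solution
\[
z_l=\frac{X_{ij}(z_j-z_k)\,z_i-(z_k-z_i)\,z_j}{X_{ij}(z_j-z_k)-(z_k-z_i)}.
\]
A one-line computation, using $X_{ij}\neq 0$ and $z_i,z_j,z_k$ distinct, gives $z_l\neq z_i$ and $z_l\neq z_j$, so the new triangle again has three distinct vertex-values and the procedure may be continued. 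Equivalently, there is a unique $\Psi_{ij}\in PSL(2,\mathbb{C})$, depending only on $X_{ij}$, carrying the projective frame of $\{ijk\}$ to that of $\{jil\}$, and the symmetry $X_{ij}=X_{ji}$ gives $\Psi_{ji}=\Psi_{ij}^{-1}$.

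Fix a base triangle $T_0\in\hat F$ and declare its three vertices to be an arbitrary ordered triple of distinct points of $\mathbb{C}\cup\{\infty\}$. Propagating the extension step along any edge-path in the dual graph of $\hat F$ assigns candidate values of $z$ to every vertex of $\hat V$; well-definedness is equivalent to the composition of the maps $\Psi$ around every closed dual path being the identity. Since $\hat S$ is simply connected, the dual polygonal complex --- with a vertex for each triangle, an edge for each shared edge, and a $2$-cell (polygon) for the star of each vertex $v\in\hat V$ --- is a tiling of $\hat S$ and hence simply connected, so by van Kampen every dual loop is a product of backtracks (which cancel since $\Psi_{ji}=\Psi_{ij}^{-1}$) and of conjugates of the boundary loops of stars of single vertices. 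It therefore suffices to show that the monodromy around each vertex $i$ is trivial. Conjugating so that $z_i=\infty$, the relevant $\Psi$'s all become affine maps of $\mathbb{C}$; if the neighbours of $i$ are labelled $1,2,\dots,r$ clockwise as in Definition~\ref{def:crsys}, a telescoping computation expresses the monodromy around $i$ as $w\mapsto\bigl(\prod_{j=1}^{r}X_{ij}\bigr)w+c$, where $c$ is a nonzero multiple of the left-hand side of \eqref{eq:crsum}. Thus \eqref{eq:crproduct} and \eqref{eq:crsum} together force the monodromy around $i$ to be the identity, which is exactly the assertion (anticipated after Definition~\ref{def:crsys}) that these equations encode trivial holonomy about each vertex. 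This single telescoping identity, carried out once with care for the clockwise convention and for which of the two faces about $ij$ plays the role of $\{ijk\}$, is the only genuine calculation in the proof and is also where I expect the main difficulty to lie --- together with the bookkeeping needed to reduce arbitrary dual loops to vertex loops.

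It remains to treat uniqueness and the holonomy. If $z'$ also satisfies \eqref{eq:developingmap} for the same $X$, let $M\in PSL(2,\mathbb{C})$ be the unique M\"obius transformation agreeing with $z'\circ z^{-1}$ on the three vertices of $T_0$; since the cross ratio is M\"obius-invariant, $M\circ z$ solves \eqref{eq:developingmap} and agrees with $z'$ on $T_0$, so the forced extension step gives $M\circ z=z'$ on all of $\hat V$, proving uniqueness up to $PSL(2,\mathbb{C})$. Finally, lift $X$ to $\hat E$ invariantly under deck transformations; for $\gamma\in\pi_1(S_g)$ the reindexed map $z\circ\gamma$ again satisfies \eqref{eq:developingmap}, so by uniqueness there is a unique $\rho_\gamma\in PSL(2,\mathbb{C})$ with $z\circ\gamma=\rho_\gamma\circ z$, and composing two such relations gives $\rho_{\gamma_1\gamma_2}=\rho_{\gamma_1}\rho_{\gamma_2}$; replacing $z$ by $M\circ z$ conjugates $\rho$ by $M$, so the class of $\rho$ is well defined. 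This $\rho$ is by construction the holonomy of the complex projective structure attached to the circle pattern --- its developing map restricts on $\hat V$ to $z$ and on each face to the circumdisk chart --- which gives the last claim.
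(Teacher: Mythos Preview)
Your argument is correct and is the standard construction: seed a base triangle with an arbitrary projective frame, propagate across edges by solving \eqref{eq:developingmap} for the new vertex, reduce well-definedness to triviality of the monodromy around each vertex via simple connectedness of the dual complex on $\hat S$, and verify that triviality by sending $z_i$ to $\infty$ so that the cross-ratio constraints become affine and the composed map around $i$ has linear part $\prod_j X_{ij}$ and translational part governed by \eqref{eq:crsum}. Uniqueness and the holonomy then follow from three-point determination of M\"obius maps, exactly as you say.

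The paper itself does \emph{not} supply a proof of this proposition: it is stated as a known fact and immediately used (the remark following Definition~\ref{def:crsys} already indicates that \eqref{eq:crproduct}--\eqref{eq:crsum} encode precisely the triviality of the vertex holonomy, and the construction of a complex projective structure from $X\in P(\Theta)$ is taken for granted throughout). So there is no ``paper's own proof'' to compare against; what you have written is the natural argument the paper presupposes. Your outline of the one honest computation --- the telescoping identity around a vertex with $z_i=\infty$ --- is accurate, and your caveat about keeping the clockwise convention and the roles of $k$ and $l$ straight is well placed: that bookkeeping is indeed the only place one can slip.
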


Given two Delaunay circle patterns $X,\tilde{X}:E \to \mathbb{C}$ with corresponding developing maps $z,\tilde{z}:\hat{V} \to \mathbb{C}P^1$ respectively, it yields a function $M:\hat{F} \to SL(2,\mathbb{C})/\{\pm I\}$ where for every face $ijk\in \hat{F}$, $M_{ijk}$ corresponds to a M\"{o}bius transformation mapping vertices $z_i,z_j,z_k$ to $\tilde{z}_i,\tilde{z}_j,\tilde{z}_k$. We call $M$ the osculating M\"{o}bius transformation. For any edge $ij$ shared by two faces $ijk$ and $jil$, one could deduce that $z_i$ and $z_j$ are the fixed points of the M\"{o}bius transformation $ M_{jil}^{-1}M_{ijk}$. Thus $M_{jil}^{-1}M_{ijk}$ have eigenvectors $\left(\begin{array}{c} z_i \\ 1
\end{array} \right)$ and $\left(\begin{array}{c} z_j \\ 1
\end{array} \right)$ (See \cite[Proposition 2.2]{Lam2024} for the explicit formula). On the other hand, for any face $ijk \in \hat{F}$ and $\gamma \in \pi_1(S_g)$, the M\"{o}bius transformations corresponding to $M_{\gamma(ijk)} \circ \rho_{\gamma}$ and  $\tilde{\rho}_{\gamma} \circ M_{ijk}$ respectively map $z_i,z_j,z_k$ to $\tilde{z}_{\gamma(i)},\tilde{z}_{\gamma(j)},\tilde{z}_{\gamma(k)}$. Thus we deduce that 
\[
M_{\gamma(ijk)} \rho_{\gamma} = \tilde{\rho}_{\gamma} M_{ijk}
\]
which is equivalent to 
\begin{align}\label{eq:chmon}
	\tilde{\rho}_{\gamma} \rho_{\gamma}^{-1}=M_{\gamma(ijk)} \rho_{\gamma} M^{-1}_{ijk} \rho_{\gamma}^{-1}.
\end{align}

Suppose $X^{(t)}:E \to \mathbb{C}$ is a 1-parameter family of cross ratios satisfying Equation \eqref{eq:crproduct} \eqref{eq:crsum} with $X=X^{(t)}|_{t=0}$. We denote $x:= \frac{d}{dt}(\log X^{(t)})|_{t=0}$, $\dot{\rho}:= \frac{d}{dt}\rho^{(t)}|_{t=0}$ and $m:= \frac{d}{dt}M^{(t)}|_{t=0}$. Particularly we have $m:\hat{F} \to sl(2,\mathbb{C})$. Equation \eqref{eq:chmon} implies that the function $\tau:\pi_1(S_g) \to sl(2,\mathbb{C})$ defined such that for any $\gamma \in \pi_1$
\[
\tau_{\gamma}:= \dot{\rho}_{\gamma}\rho^{-1}_{\gamma}= m_{\gamma(ijk)} - \Ad \rho_{\gamma} (m_{ijk})
\]
is independent of $ijk \in \hat{F}$. Hence $\tau$ is a well-defined cocycle representing an element in $H^1_{\Ad \rho}(\pi_1(S_g),sl(2,\mathbb{C}))$. On the other hand, for every oriented edge $ij$ with left face $ijk$ and right face $jil$, the matrix $m_{ijk}-m_{jil}$ corresponds to an infinitesimal M\"{o}bius transformation that vanish at $z_i$ and $z_j$. It indicates that the matrix has eigenvector $\left(\begin{array}{c} z_i \\ 1
\end{array} \right)$ and $\left(\begin{array}{c} z_j \\ 1
\end{array} \right)$. One can show that the eigenvalue is $x_{ij}$ and $-x_{ij}$ (See \cite[Corollary 5.3]{Lam2015a}). Explicitly, we have
\[
m_{ijk}-m_{jil} = \frac{x_{ij}}{z_j-z_i} \left(
\begin{array}{cc}
	\frac{z_i+z_j}{2 } & -z_i
	z_j \\
	1 & \frac{-z_i-z_j}{2} \\
\end{array}
\right).
\]
where we lift $x$ to the universal cover such that it is invariant under deck transformations. In the following, we reverse the construction. Given an element $x \in W^{\mathbb{C}}_X$, we shall produce the osculation M\"{o}bius transformation $m$ and then a cocycle $\tau$.

\begin{proposition}\label{prop:equialpha}
	Suppose $X\in P(\Theta)$ with developing map $z$ and holonomy representation $\rho$. Given $x \in W^{\mathbb{C}}_X$, we lift it to the universal cover such that 
	it is invariant under deck transformations. Then it induces a function $\alpha: \vec{E} \to sl(2,\mathbb{C})$ on the oriented edges $\vec{E}$ on the universal cover
	\begin{equation}
			\alpha(\vec{ij}):= \frac{x_{ij}}{z_j-z_i} \left(
			\begin{array}{cc}
					\frac{z_i+z_j}{2 } & -z_i
					z_j \\
					1 & \frac{-z_i-z_j}{2} \\
				\end{array}
			\right) = -\alpha(\vec{ji}).
		\end{equation}
	It is a closed 1-form on the dual graph, in the sense that for every $i \in \hat{V}$
	\[
	\sum_{j} \alpha(\vec{ij}) =0.
	\]
	Furthermore, it is equivariant with respect to the fundamental group, namely for every $\gamma \in \pi_1(M)$
	\[
	\alpha\circ \gamma = \rho_{\gamma} \alpha  \rho^{-1}_{\gamma} = \Ad \rho_{\gamma} (\alpha)
	\]
\end{proposition}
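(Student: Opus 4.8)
The plan is to reduce both assertions—closedness on the dual graph and $\pi_1$-equivariance—to a single $\Ad$-naturality property of the $sl(2,\mathbb{C})$-element attached to an oriented edge. For distinct $p,q$ set
\[
N(p,q):=\begin{pmatrix} \frac{p+q}{2} & -pq\\ 1 & -\frac{p+q}{2}\end{pmatrix},\qquad \xi_{p,q}:=\frac{1}{q-p}N(p,q)\in sl(2,\mathbb{C}),
\]
so that $\alpha(\vec{ij})=x_{ij}\,\xi_{z_i,z_j}$. One checks that $\xi_{p,q}$ is the unique element of $sl(2,\mathbb{C})$ whose associated vector field on $\mathbb{C}P^1$ vanishes at $p$ and $q$ and has linearisation $+1$ at $p$. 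Since a Möbius transformation sends fixed points of a vector field to fixed points and preserves the eigenvalue of the linearisation there, this characterisation gives, for every $g\in SL(2,\mathbb{C})$,
\[
\Ad g\,(\xi_{p,q}) \;=\; \xi_{\,g\cdot p,\ g\cdot q},
\]
where $g\cdot$ denotes the Möbius action (equivalently, a one-line matrix computation, cf.\ \cite{Lam2024}). The antisymmetry $\alpha(\vec{ij})=-\alpha(\vec{ji})$ is then immediate from $x_{ij}=x_{ji}$ and $\xi_{p,q}=-\xi_{q,p}$.

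With this, $\pi_1$-equivariance follows at once: the developing map is $\rho$-equivariant, $z_{\gamma(j)}=\rho_\gamma\cdot z_j$, and $x$ is deck-invariant, hence
\[
\alpha\bigl(\gamma(\vec{ij})\bigr)=x_{\gamma(i)\gamma(j)}\,\xi_{z_{\gamma(i)},z_{\gamma(j)}}=x_{ij}\,\xi_{\rho_\gamma\cdot z_i,\,\rho_\gamma\cdot z_j}=\Ad\rho_\gamma\bigl(x_{ij}\,\xi_{z_i,z_j}\bigr)=\Ad\rho_\gamma\bigl(\alpha(\vec{ij})\bigr).
\]
For closedness, fix $i\in\hat V$ with neighbours $1,\dots,r$ in cyclic order around its link. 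Because $\sum_j\alpha(\vec{ij})=0$ is an $\Ad$-equivariant condition by the displayed identity (it holds for one developing map iff it holds for any), we may post-compose the developing map with a Möbius transformation so that $z_i=0$ and all $z_j$ ($j\sim i$) are finite. Then $N(0,z_j)$ is lower triangular, so $\alpha(\vec{ij})$ has diagonal entries $\pm x_{ij}/2$, vanishing $(1,2)$-entry and $(2,1)$-entry $x_{ij}/z_j$; hence the matrix identity $\sum_j\alpha(\vec{ij})=0$ is equivalent to the two scalar identities $\sum_j x_{ij}=0$ and $\sum_j x_{ij}/z_j=0$. The first is \eqref{eq:xcrproduct}. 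For the second, the cross-ratio relation \eqref{eq:developingmap} with $z_i=0$ yields the link recursion $\tfrac1{z_{s+1}}-\tfrac1{z_s}=X_{is}\bigl(\tfrac1{z_s}-\tfrac1{z_{s-1}}\bigr)$; with $P_m:=X_{i1}\cdots X_{im}$ and $P_0:=1$, telescoping gives
\[
\frac1{z_s}=\frac1{z_r}+\Bigl(\frac1{z_1}-\frac1{z_r}\Bigr)\sum_{m=0}^{s-1}P_m ,
\]
where $P_r=1$ and $\sum_{m=0}^{r-1}P_m=0$ are exactly \eqref{eq:crproduct} and \eqref{eq:crsum}. Substituting, using \eqref{eq:xcrproduct} repeatedly (to discard the $1/z_r$ term and to rewrite $\sum_{s=m+1}^{r}x_{is}=-(x_{i1}+\cdots+x_{im})$) and interchanging the order of summation,
\[
\sum_{s=1}^r\frac{x_{is}}{z_s}=\Bigl(\frac1{z_1}-\frac1{z_r}\Bigr)\sum_{m=0}^{r-1}P_m\sum_{s=m+1}^{r}x_{is}=-\Bigl(\frac1{z_1}-\frac1{z_r}\Bigr)\sum_{m=1}^{r-1}P_m\,(x_{i1}+\cdots+x_{im})=0,
\]
the last equality because $\sum_{m=1}^{r}P_m(x_{i1}+\cdots+x_{im})=0$ is precisely \eqref{eq:xcrsum} and its $m=r$ term equals $P_r\,(x_{i1}+\cdots+x_{ir})=1\cdot 0$. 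Thus $\sum_j\alpha(\vec{ij})=0$.

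The one genuinely delicate point is the orientation bookkeeping: one must track which of the two triangles adjacent to $\{i,s\}$ plays the role of $\{ijk\}$ versus $\{jil\}$, compatibly with the clockwise labelling in Definition \ref{def:crsys}, so that the link recursion comes out as $\tfrac1{z_{s+1}}-\tfrac1{z_s}=X_{is}(\tfrac1{z_s}-\tfrac1{z_{s-1}})$ rather than the superficially similar form with $X_{is}$ replaced by $X_{is}^{-1}$; it is this form that makes $\sum_{m=0}^{r-1}P_m=0$ coincide with \eqref{eq:crsum} and correspondingly makes $\sum_{m=1}^{r-1}P_m(x_{i1}+\cdots+x_{im})=0$ coincide with \eqref{eq:xcrsum}. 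Once this is pinned down, the computation is mechanical; in particular no genericity of the factor $\tfrac1{z_1}-\tfrac1{z_r}$ is required, since both $\sum_j x_{ij}$ and the summation-by-parts sum vanish on their own.
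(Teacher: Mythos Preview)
Your proof is correct and follows the same overall strategy as the paper: equivariance via $\Ad$-naturality of the edge matrix, and closedness via reducing the matrix identity $\sum_j\alpha(\vec{ij})=0$ to the two scalar conditions $\sum_j x_{ij}=0$ and $\sum_j x_{ij}/(z_j-z_i)=0$.

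The execution of the closedness step differs in an instructive way. The paper does \emph{not} normalise $z_i$; it observes that each of the four matrix entries of $\sum_j\alpha(\vec{ij})$ is an affine combination of $\sum_j x_{ij}$ and $\sum_j x_{ij}/(z_j-z_i)$ with coefficients depending only on $z_i$, so vanishing of these two scalars gives all four entries at once. It then simply asserts that substituting the developing map into \eqref{eq:xcrsum} yields $\sum_j x_{ij}/(z_j-z_i)=0$, leaving the telescoping verification to the reader. You instead first send $z_i\mapsto 0$ (legitimately, since the condition is $\Ad$-equivariant), which makes the matrix lower triangular and the reduction to two scalars immediate, and then supply in full the telescoping computation that the paper suppresses. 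So your argument is longer but strictly more detailed on the one step the paper skips; conversely, the paper's unnormalised computation shows that the Möbius normalisation, while convenient, is not needed.
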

\begin{proof}
		By substituting \eqref{eq:developingmap}, Equation \eqref{eq:xcrsum} is equivalent to the sum around vertex $i$
	\begin{align*}
		\sum_j \frac{x_{ij}}{z_j - z_i} &=0
	\end{align*} 
	Together with Equation \eqref{eq:xcrproduct}, we deduce that for every vertex $i \in \hat{V}$
	\begin{align*}
		\sum_j \frac{x_{ij}}{z_j - z_i} \frac{z_i+z_j}{2} = \frac{1}{2} \sum_j x_{ij} +  z_i \sum_j \frac{x_{ij}}{z_j - z_i} =0
	\end{align*}
   while
		\begin{align*}
		\sum_j \frac{x_{ij}}{z_j - z_i} z_i z_j = z_i\sum_j x_{ij} +  z^2_i \sum_j \frac{x_{ij}}{z_j - z_i} =0
	\end{align*}
	Thus, the closeness of $\alpha$ follows. 
	
	Let $\left( 	\begin{array}{cc}
		a & b
	   \\ c
		 & d \\
	\end{array}  \right) \in SL(2,\mathbb{C})$ and write $\tilde{z}= \frac{a z +b}{c z +d}$ be the image of $z$ under the associated M\"{o}bius transformation. Then with direct computation, we always have
\[
\left(
\begin{array}{cc}
	\frac{\tilde{z}_i+\tilde{z}_j}{2 (\tilde{z}_j-\tilde{z}_i)} & \frac{-\tilde{z}_i
	\tilde{z}_j}{\tilde{z}_j-\tilde{z}_i} \\
	\frac{1}{\tilde{z}_j-\tilde{z}_i} & \frac{-\tilde{z}_i-\tilde{z}_j}{2(\tilde{z}_j-\tilde{z}_i)} \\
\end{array}
\right) =  \left( 	\begin{array}{cc}
	a & b
	\\ c
	& d \\
\end{array}  \right) \left(
\begin{array}{cc}
	\frac{z_i+z_j}{2(z_j-z_i) } & \frac{-z_i
	z_j}{z_j-z_i} \\
	\frac{1}{z_j-z_i} & \frac{-z_i-z_j}{2(z_j-z_i)} \\
\end{array}
\right) \left( 	\begin{array}{cc}
	a & b
	\\ c
	& d \\
\end{array}  \right)^{-1}.
\]
Since for any $\gamma \in \pi_1(S_g)$ the vertices $z_{\gamma(i)}$ and $z_{\gamma(j)}$ are the image of $z_i$ and $z_j$ under the M\"{o}bius transformation of $\rho_{\gamma}$, thus it implies that for every oriented edges $\vec{ij}$
\[
\alpha(\gamma(\vec{ij}))= \rho_{\gamma} \alpha(\vec{ij}) (\rho_{\gamma})^{-1}
\]
and hence the equivariance holds.
\end{proof}

\begin{proposition}
	Suppose $X\in P(\Theta)$ with developing map $z:\hat{V} \to \mathbb{C}$ and holonomy representation $\rho$. Then for every $x \in W^{\mathbb{C}}_X$, there is a function $m:\hat{F} \to sl(2,\mathbb{C})$ unique up to a constant such that for every oriented edge $ij$,
\[
m_{ijk}-m_{jil} = \frac{x_{ij}}{z_j-z_i} \left(
\begin{array}{cc}
	\frac{z_i+z_j}{2 } & -z_i
	z_j \\
	1 & \frac{-z_i-z_j}{2} \\
\end{array}
\right).
\]
There is a well-defined complex linear map 
\begin{align*}
		\hol: W_X^{\mathbb{C}} &\to H^1_{\Ad\rho}(\pi_1(S_g),sl(2,\mathbb{C})) \\
		x &\mapsto [\tau]
\end{align*}
where $\tau:\pi_1(S_g) \to sl(2,\mathbb{C})$ is a cocycle satisfying for any $\gamma \in \pi_1$ we have
\[
\tau_{\gamma}= m_{\gamma(ijk)} - \Ad \rho_{\gamma} (m_{ijk})
\]
independent of $ijk \in \hat{F}$.
\end{proposition}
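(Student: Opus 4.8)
The plan is to realize $m$ as a combinatorial primitive of the $1$-form $\alpha$ of Proposition~\ref{prop:equialpha}, and then to read off the cocycle $\tau$ from the failure of $m$ to be $\pi_1$-equivariant; the construction mirrors the standard passage from a family of holonomies to a group cocycle recalled in Section~\ref{sec:proj}.

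\emph{Step 1 (the potential $m$).} First I would regard the antisymmetric assignment $\vec{ij}\mapsto\alpha(\vec{ij})$ as a simplicial $1$-cochain on the dual cell complex of the lifted triangulation, whose vertices are the faces $\hat F$, whose edges are dual to the edges of $\hat E$, and whose $2$-cells correspond to the primal vertices $\hat V$. By Proposition~\ref{prop:equialpha} this cochain is closed, since its sum around the dual $2$-cell at a vertex $i$ is exactly $\sum_j\alpha(\vec{ij})=0$. Because $\tilde S$ is simply connected, every cycle in the dual $1$-skeleton bounds a union of dual $2$-cells, so the sum of $\alpha$ along any dual cycle vanishes. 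Fixing a base face $f_0$ with $m_{f_0}:=0$ and defining $m_f$ as the signed sum of $\alpha$ along any dual edge-path from $f_0$ to $f$ then yields a well-defined $m:\hat F\to sl(2,\mathbb{C})$ with $m_{ijk}-m_{jil}=\alpha(\vec{ij})$ for the left face $ijk$ and right face $jil$ of $\vec{ij}$. Connectedness of the dual $1$-skeleton shows $m$ is unique up to adding a constant element of $sl(2,\mathbb{C})$, and the normalization $m_{f_0}=0$ makes $x\mapsto\alpha\mapsto m$ complex linear.

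\emph{Step 2 (the cocycle).} For a fixed $\gamma\in\pi_1(S_g)$, I would examine the function $\hat F\ni f\mapsto m_{\gamma(f)}-\Ad\rho_\gamma(m_f)$. Evaluating it at two faces $ijk$, $jil$ sharing an edge and subtracting, the $m$-terms telescope via Step~1; using that deck transformations are orientation-preserving, so $\gamma$ carries the left face of $\vec{ij}$ to the left face of $\gamma(\vec{ij})$, the difference becomes $\alpha(\gamma(\vec{ij}))-\Ad\rho_\gamma(\alpha(\vec{ij}))$, which vanishes by the equivariance $\alpha\circ\gamma=\Ad\rho_\gamma(\alpha)$ of Proposition~\ref{prop:equialpha}. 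Hence the function is constant on the connected dual $1$-skeleton; call its value $\tau_\gamma$. Evaluating the defining relation at a face $f$ and at $\gamma_2(f)$, and using $\rho_{\gamma_1\gamma_2}=\rho_{\gamma_1}\rho_{\gamma_2}$ hence $\Ad\rho_{\gamma_1\gamma_2}=\Ad\rho_{\gamma_1}\circ\Ad\rho_{\gamma_2}$, one obtains $\tau_{\gamma_1\gamma_2}=\tau_{\gamma_1}+\Ad\rho_{\gamma_1}(\tau_{\gamma_2})$ exactly as in \eqref{eq:cocycle}, so $\tau\in Z^1_{\Ad\rho}(\pi_1(S_g),sl(2,\mathbb{C}))$.

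\emph{Step 3 (well-definedness and linearity).} Replacing $m$ by $m+c$ for a constant $c\in sl(2,\mathbb{C})$ replaces $\tau_\gamma$ by $\tau_\gamma+(c-\Ad\rho_\gamma(c))$, i.e.\ changes $\tau$ by a coboundary, so the class $[\tau]\in H^1_{\Ad\rho}(\pi_1(S_g),sl(2,\mathbb{C}))$ is independent of the choice of $m$. Together with the linearity from Step~1, this shows $\hol:x\mapsto[\tau]$ is a well-defined complex linear map. I expect the only step with genuine content to be Step~1: recognizing the closedness condition $\sum_j\alpha(\vec{ij})=0$ as exactness of a cochain on the simply connected dual complex, and keeping the orientation conventions (left versus right face, orientation-preserving deck transformations) consistent throughout. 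Steps~2 and~3 are then the same bookkeeping as in the derivation of the cocycle condition \eqref{eq:cocycle} in Section~\ref{sec:proj}.
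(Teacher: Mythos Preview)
Your proposal is correct and follows essentially the same argument as the paper: integrate the closed $1$-form $\alpha$ on the simply connected dual graph to obtain $m$, use the equivariance of $\alpha$ to show $m_{\gamma(f)}-\Ad\rho_\gamma(m_f)$ is face-independent, verify the cocycle identity, and observe that the additive ambiguity in $m$ contributes only a coboundary. Your write-up is slightly more explicit about the dual-complex interpretation and the linearity in $x$, but the content is identical.
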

\begin{proof}
From the previous Proposition \ref{prop:equialpha}, $\alpha$ is a closed 1-form on the dual graph. Since the universal cover is simply connected, it can be integrated to obtain $m:\hat{F} \to sl(2,\mathbb{C})$ such that
\[
m_{ijk}-m_{jil} = \alpha(\vec{ij}).
\]

For any fixed $ijk \in \hat{F}$, we define a function $\tau:\pi_1(S_g) \to sl(2,\mathbb{C})$ such that for any $\gamma \in \pi_1$
\[
\tau_{\gamma}= m_{\gamma(ijk)} - \Ad \rho_{\gamma} (m_{ijk})
\]
It is independent of the face chosen because of the equivariance of $\alpha$. Indeed, if $jil \in \hat{F}$ is adjacent to $ijk$, then
\begin{align*}
m_{\gamma(jil)} - \Ad \rho_{\gamma} (m_{jil})=& m_{\gamma(ijk)} - \Ad \rho_{\gamma} (m_{ijk}) + \alpha(\gamma(\vec{ij}))- \Ad \rho_{\gamma}  (\alpha(\vec{ij}))  \\
=&  m_{\gamma(ijk)} - \Ad \rho_{\gamma} (m_{ijk}) \\=& \tau_{\gamma}.
\end{align*}
The function $\tau$ is a cocycle since for any $\gamma_1, \gamma_2 \in \pi_1(S_g)$
\begin{align*}
\tau_{\gamma_1 \gamma_2} =& m_{\gamma_1 \gamma_2(ijk)} - \rho_{\gamma_1} \rho_{\gamma_2} m_{ijk} \rho_{\gamma_2}^{-1} \rho_{\gamma_1}^{-1} \\
=& m_{\gamma_1(\gamma_2(ijk))} - \rho_{\gamma_1} m_{\gamma_2(ijk)} \rho_{\gamma_1}^{-1}+ \rho_{\gamma_1}(m_{\gamma_2(ijk)}- \rho_{\gamma_2} m_{ijk} \rho_{\gamma_2}^{-1}) \rho_{\gamma_1}^{-1} \\=&  \tau_{\gamma_1} + \Ad \rho_{\gamma_1}( \tau_{\gamma_2})
\end{align*}
If the function $m$ is added by a constant, then $\tau$ differs by a coboundary. Hence the mapping $[\tau]$ is well defined.
\end{proof}

\begin{definition}
	With the linear map $\hol$, we equip $W^{\mathbb{C}}_X$ with a skew symmetric bilinear form $\omega_G$ defined via
	\[
	\omega_G(x,\tilde{x}):= \tilde{\omega}_G(\hol(x), \hol(\tilde{x})) 
	\]
	which is the pullback of the symplectic form on $P(S_g)$. 
\end{definition}

\section{Pullbacks of symplectic forms coincide on $P(\Theta)$}

Observe that the function $\alpha$ in Proposition \ref{prop:equialpha} is a 1-cocycle in cellular cohomology, with the cell decomposition $(V^*,E^*,F^*)$ dual to our triangulation $(\hat{V},\hat{E},\hat{E})$ of the universal cover. We have bijections $\hat{V} \cong F^*,\hat{E} \cong E^*, \hat{F} \cong F^*$. Particularly,  $(V^*,E^*,F^*)$ is trivalent (See Figure \ref{fig:dualface}).

We shall compute the cup product in cellular cohomology and express it in two ways. We than show that they coincide with $\omega_G$ and $\omega_{P}$ respectively. Somehow our computation for $\omega_{P}$ is in line with Papadopoulos and Penner's derivation of $\omega_P$ from Thurston's symplectic form on measured laminations \cite{PP1993}.

Recall that in cellular cohomology, the cup-product of two 1-cocycles $\alpha, \tilde{\alpha}$ is a 2-cocycle $\alpha \cup \tilde{\alpha}$, which can be evaluated on faces. Suppose $\phi \in F^*$ is a triangular face on the universal cover with oriented edges $e_1, e_2, e_3 \in \partial \phi$. Then 
\begin{align}\label{eq:tricup}
	\begin{split}
			\alpha \cup \tilde{\alpha}(\phi):=&\frac{1}{6}\left(\alpha(e_1)\tilde{\alpha}(e_2)+\alpha(e_2)\tilde{\alpha}(e_3)+\alpha(e_3)\tilde{\alpha}(e_1)\right)\\&-\frac{1}{6}\left(\alpha(e_1)\tilde{\alpha}(e_3)+\alpha(e_2)\tilde{\alpha}(e_1)+\alpha(e_3)\tilde{\alpha}(e_2)\right)
	\end{split}
\end{align}
takes values in $gl(2,\mathbb{C})$. Generally if $\phi$ is a $k+2$-polygon with $k\geq1$, we triangulate $\phi$ into triangular faces $\phi_1\dots \phi_k$ by adding diagonals. There are unique extensions of $\alpha$ and $\tilde{\alpha}$ to the diagonals such that their summation over $\partial \phi_r$ vanish for all $r=1,\dots k$. With these extensions and Equation \eqref{eq:tricup}, we define
\[
\alpha \cup \tilde{\alpha}(\phi):=\sum_{r=1}^k 	\alpha \cup \tilde{\alpha}(\phi_r)
\]
which is independent of the way how to triangulate $\phi$. An elementary way to see this independence is to verify the invariance under edge flipping. Because $\alpha,\tilde{\alpha}$ are equivariant with respect to $\rho$, we have for any face $\phi$ and for $\gamma \in \pi_1(S_{g})$
\[
\tr(\alpha \cup \tilde{\alpha}(\gamma\cdot\phi)) = \tr((\rho_{\gamma} \alpha  \rho^{-1}_{\gamma}) \cup (\rho_{\gamma} \tilde{\alpha}  \rho^{-1}_{\gamma})(\phi)) = \tr(\alpha \cup \tilde{\alpha} (\phi)) 
\]
is invariant under the deck transformation.

We take a fundamental domain $\mathcal{F}$ on the universal cover as in Section \ref{sec:proj} and define
\begin{equation}\label{eq:simcup}
	\omega(x,\tilde{x}):= \sum_{\phi \in \mathcal{F}} \tr(\alpha \cup \tilde{\alpha}(\phi)) \in \mathbb{C}
\end{equation}
where the sum is over all the faces in $\mathcal{F}$. The quantity is independent of the choice of the fundamental domain.

We evaluate $ \tr(\alpha \cup \tilde{\alpha}(\phi))$ for every face with the following observation.

\begin{lemma}\label{eq:traceprod} For any $z_i,z_j,z_k,z_l \in \mathbb{C}$ such that $z_i \neq z_j$ and $z_k \neq z_l$,  we have
	\begin{align*}
		\tr(\left(
		\begin{array}{cc}
			\frac{z_i+z_j}{2 (z_j-z_i)} & -\frac{z_i
				z_j}{z_j-z_i} \\
			\frac{1}{z_j-z_i} & \frac{-z_i-z_j}{2 (z_j-z_i)} \\
		\end{array}
		\right).\left(
		\begin{array}{cc}
			\frac{z_k+z_l}{2 (z_l-z_k)} & -\frac{z_k
				z_l}{z_l-z_k} \\
			\frac{1}{z_l-z_k} & \frac{-z_k-z_l}{2 (z_l-z_k)} \\
		\end{array}
		\right) ) &=  \frac{1}{2}-\frac{(z_i-z_k) (z_j-z_l)}{(z_i-z_j)
			(z_k-z_l)}.
	\end{align*}
	In particular, it is equal to $1/2$ if  $z_i =z_k$ or $z_j=z_l$. It is equal to $-1/2$ if $z_i =z_l$ or $z_j=z_k$.
\end{lemma}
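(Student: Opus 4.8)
The plan is a direct computation, organized so that trace-freeness of the two matrices halves the work. Abbreviate
\[
N(z,w):=\begin{pmatrix}\dfrac{z+w}{2(w-z)} & -\dfrac{zw}{w-z}\\[4pt] \dfrac{1}{w-z} & \dfrac{-z-w}{2(w-z)}\end{pmatrix},
\]
so that the quantity in question is $\tr\bigl(N(z_i,z_j)\,N(z_k,z_l)\bigr)$. Each $N(z,w)$ has the form $\begin{pmatrix}a & b\\ c & -a\end{pmatrix}$, and for two matrices $A=\begin{pmatrix}a & b\\ c & -a\end{pmatrix}$, $B=\begin{pmatrix}p & q\\ r & -p\end{pmatrix}$ of this shape one has $\tr(AB)=2ap+br+cq$, so there are only three products to track rather than four.

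First I would substitute the entries of $N(z_i,z_j)$ and $N(z_k,z_l)$ into $2ap+br+cq$ and clear the common denominator $(z_j-z_i)(z_l-z_k)$; what remains is the numerator $\tfrac12(z_i+z_j)(z_k+z_l)-z_iz_j-z_kz_l$. Next I would expand the claimed right-hand side over the same denominator, i.e.\ compute $\tfrac12(z_j-z_i)(z_l-z_k)-(z_i-z_k)(z_j-z_l)$, and check that these two biquadratic polynomials agree term by term. Using $(z_j-z_i)(z_l-z_k)=(z_i-z_j)(z_k-z_l)$ rewrites the denominator in the stated form. The special values then follow by inspection: if $z_i=z_k$ or $z_j=z_l$ the cross-ratio term vanishes, giving $1/2$; if $z_i=z_l$ or $z_j=z_k$ the cross-ratio term equals $1$, giving $-1/2$.

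There is no genuine obstacle here beyond careful sign bookkeeping; the only content is recognizing the resulting symmetric expression in $z_i,z_j,z_k,z_l$ as the affine cross-ratio. For intuition one may observe that $N(z,w)$ has eigenvectors $(z,1)^{\top}$ and $(w,1)^{\top}$ with eigenvalues $-\tfrac12$ and $\tfrac12$, so it is half the difference of the two spectral projectors attached to the points $z,w\in\mathbb{C}P^1$; the trace of a product of two such traceless projector-differences is a classical cross-ratio expression, which is precisely the asserted formula. For the purposes of the paper, however, the one-line substitution above is the most economical route, and I would present it that way.
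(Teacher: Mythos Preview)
Your proposal is correct; the paper states this lemma without proof, treating it as a routine computation, and your direct substitution using the trace-free form $\tr(AB)=2ap+br+cq$ is exactly the kind of verification the paper leaves to the reader. The eigenvector remark is a nice bonus but not needed.
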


\begin{proposition}\label{prop:pencup}
 Let $X \in P(\Theta)$ represent a Delaunay circle pattern on a surface with complex projective structure. Then for any $x,\tilde{x} \in W_X^{\mathbb{C}}$, we have
\[
\omega(x,\tilde{x}) = \frac{1}{2} \omega_{P}^{\mathbb{C}}(x,\tilde{x}) 
\]	
\end{proposition}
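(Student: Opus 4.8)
## Proof proposal for Proposition \ref{prop:pencup}

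The plan is to evaluate the cellular cup product $\omega(x,\tilde x)=\sum_{\phi\in\mathcal F}\tr(\alpha\cup\tilde\alpha(\phi))$ directly and match it term-by-term with Penner's expression for $\omega_P^{\mathbb C}$ given in Corollary \ref{cor:symppair} and Definition \ref{def:wg}. The key point is that the $sl(2,\mathbb C)$-valued $1$-form $\alpha$ of Proposition \ref{prop:equialpha} has the special matrix shape appearing in Lemma \ref{eq:traceprod}, so that traces of products of $\alpha$-values collapse to scalar cross-ratio-type quantities. Concretely, $\tr(\alpha(\vec{ij})\,\tilde\alpha(\vec{kl}))$ equals $x_{ij}\tilde x_{kl}$ times the scalar computed in Lemma \ref{eq:traceprod}, and when the two oriented edges share a vertex this scalar is simply $\pm\tfrac12$. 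Since the cell decomposition $(V^*,E^*,F^*)$ dual to the triangulation is trivalent, every face $\phi\in F^*$ is the dual of a primal vertex $i$, and its three (or more, before we reduce) boundary edges all emanate from the star of $i$; consecutive boundary edges of $\phi$ therefore share the primal vertex $i$, which is exactly the degenerate case of Lemma \ref{eq:traceprod}.

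The main computation proceeds as follows. First I fix a face $\phi\in F^*$ dual to a primal vertex $i$ with neighbours $1,2,\dots,r$ in clockwise order; its boundary is a cycle of oriented dual edges, each dual to a primal edge $\{i,s\}$. Using the bilinearity of $\cup$ and the trace, together with Lemma \ref{eq:traceprod}, I rewrite $\tr(\alpha\cup\tilde\alpha(\phi))$ purely in terms of the scalars $x_{is},\tilde x_{is}$ and the cross ratios of the $z$-values $z_1,\dots,z_r$ around $z_i$; the ``$\tfrac12$'' contributions from adjacent edges will be isolated from the genuine cross-ratio contributions. Second, I handle polygonal faces (arising because the dual cell decomposition of the \emph{universal cover} may have higher-valence faces only where we have chosen not to subdivide — but in fact in our setup every dual face is a triangle after the triangulation, so this step is light): one checks that the diagonal-extension recipe for $\alpha$ is consistent with Equations \eqref{eq:xcrproduct}–\eqref{eq:xcrsum} and with the relation $\sum_j\alpha(\vec{ij})=0$. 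Third, I sum over all faces in $\mathcal F$, which by the bijection $\hat F\cong F^*$ is a sum over primal vertices; I then reorganize this sum as a sum over primal edges, each edge $\{ij\}$ appearing once with left face $ijk$ and once with right face $jil$. The output of this reorganization should be exactly $\tfrac12\sum_{ij\in E} a_{ij}\tilde x_{ij}$ for an appropriate choice of lift $a$ of $x$, up to the antisymmetrization built into the cup product — comparing with Corollary \ref{cor:symppair} gives the factor $\tfrac12$ relative to $\omega_P^{\mathbb C}$, since $\tilde\omega_P^{\mathbb C}(a,\tilde a)=2\sum_{ij}a_{ij}\tilde x_{ij}$.

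The step I expect to be the main obstacle is the bookkeeping that converts the per-vertex sum into the per-edge (or per-face) sum matching Penner's formula. The cup product formula \eqref{eq:tricup} antisymmetrizes over the three edges of a dual triangle, so after applying Lemma \ref{eq:traceprod} one gets a difference of cross-ratio terms; the delicate part is recognizing that, after summing the contributions of the two dual faces adjacent to a given primal edge and using the closedness relations $\sum_j\alpha(\vec{ij})=0$ to kill the cross-ratio terms that are \emph{not} of the degenerate $\pm\tfrac12$ type, precisely the $\pm\tfrac12$ boundary terms survive and assemble into Penner's $d\log A\wedge d\log A$ expression. This is where the linearized cross-ratio equations \eqref{eq:xcrproduct}–\eqref{eq:xcrsum} — equivalently $\sum_j x_{ij}=0$ and $\sum_j x_{ij}/(z_j-z_i)=0$ from the proof of Proposition \ref{prop:equialpha} — must be used in an essential way, and getting the combinatorial signs and the factor $\tfrac12$ exactly right will require care. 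I would carry out the single-face computation first in full, then verify the two-face cancellation on one primal edge, and only then assert the global identity.
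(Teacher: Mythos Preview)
Your strategy is the paper's, but two misconceptions would make the execution much harder than necessary. First, the combinatorics: the dual decomposition has trivalent \emph{vertices} (because primal faces are triangles), but a dual \emph{face} $\phi_i$ is an $r$-gon with $r=\deg(i)$. Your parenthetical ``in our setup every dual face is a triangle'' is wrong, so the triangulation-by-diagonals step is the generic case, not a light one.

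Second, and this is the missing idea, you underestimate how far Lemma \ref{eq:traceprod} goes. Every boundary edge of $\phi_i$ is dual to some primal edge $\{i,s\}$, so its $\alpha$-value has $(z_i,1)^T$ as an eigenvector; by closedness the same holds for every diagonal of $\phi_i$. Hence \emph{all} trace products appearing in $\tr(\alpha\cup\tilde\alpha(\phi_i))$, not just those for consecutive edges, land in the degenerate $\pm\tfrac12$ case of the lemma. There are no ``genuine cross-ratio terms'' to kill, the developing map $z$ drops out of each $\tr(\alpha\cup\tilde\alpha(\phi_i))$ before any global summation, and the linearized equations \eqref{eq:xcrproduct}--\eqref{eq:xcrsum} enter only through the closedness of $\alpha$ (so that the cup product on the triangulated polygon is well-defined), not through any later cancellation. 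With this in hand, the paper expands $x$ in a lift $a\in h^{-1}(x)$ and checks, for a fixed edge $\{ij\}$, that the coefficient of $a_{ij}$ across the four dual faces $\phi_i,\phi_j,\phi_k,\phi_l$ totals $\tilde x_{ij}$; this gives $\omega(x,\tilde x)=\sum_{ij}a_{ij}\tilde x_{ij}=\tfrac12\omega_P^{\mathbb C}(x,\tilde x)$ by Corollary \ref{cor:symppair} (so the target is $\sum a_{ij}\tilde x_{ij}$, not $\tfrac12\sum$ as you wrote).
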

\begin{proof}
	Given $x,\tilde{x} \in W_X^{\mathbb{C}}$, via Proposition \ref{prop:equialpha}, we obtain two functions $\alpha,\tilde{\alpha}:\vec{E} \to sl(2,\mathbb{C})$. We shall compute Equation \eqref{eq:simcup}.

	Suppose $\phi_i \in F^*$ is a face dual to vertex $i \in \hat{V}$. Observe that for all edges within $\phi_i$, both $\alpha$ and $\tilde{\alpha}$ take values in $sl(2,\mathbb{C})$-matrices sharing a common eigenvector $\left(\begin{array}{c} z_i \\ 1
	\end{array} \right)$. Then Lemma \ref{eq:traceprod} applies and implies that the term $\tr(\alpha \cup \tilde{\alpha}(\phi_i))$ involves only $x$ and $\tilde{x}$ but not the developing map $z$ at all. We further expand $x$ into some $a \in h^{-1}(x)$ and focus on the coefficients of $a_{ij}$ for an edge $ij$ in the summation $\eqref{eq:simcup}$. We shall argue that the coefficient of $a_{ij}$ in the summation is $\tilde{x}_{ij}$.
	
	By the definition of the cup product, there are only four faces in $F^*$ that contribute terms involving $a_{ij}$ (See Figure \ref{fig:dualface}). 
	\begin{figure}
		\centering
					\includegraphics[width=0.8\textwidth]{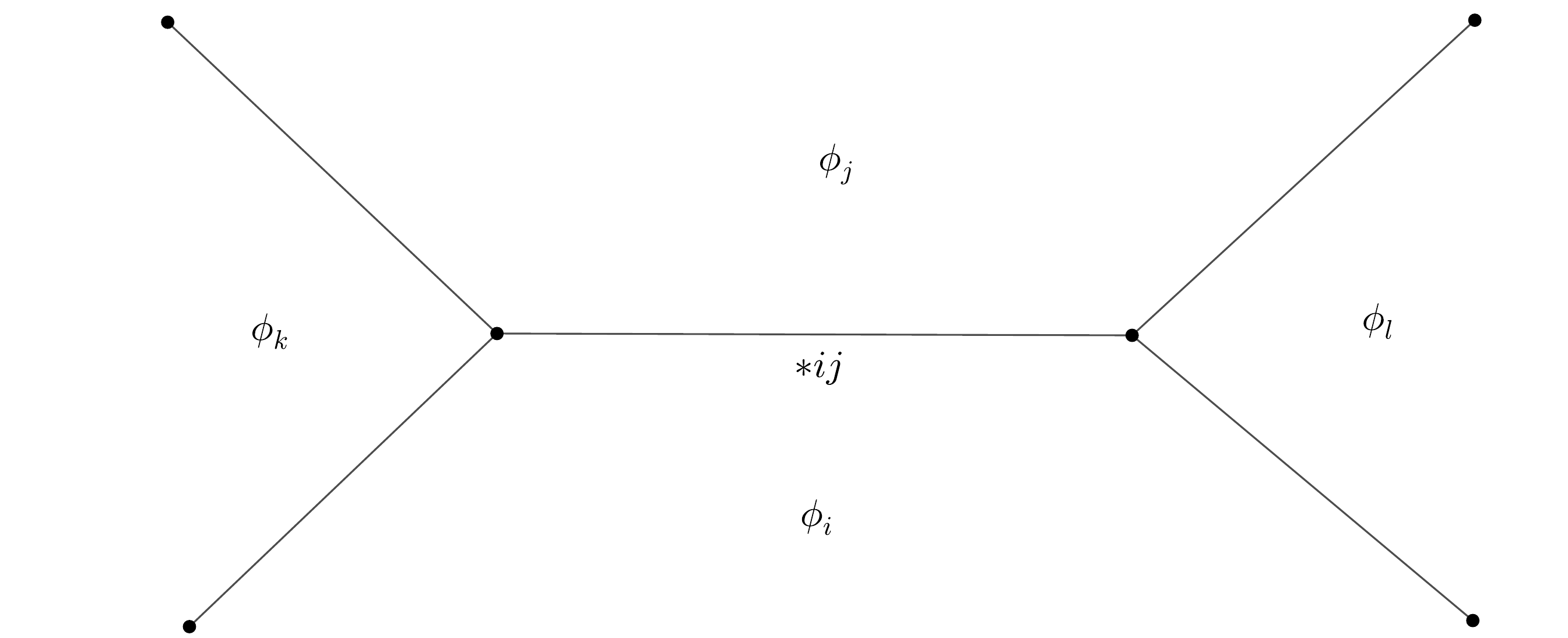}
			\caption{The edge $*ij$ is the dual edge of $ij$. It is the common edge of the dual faces $\phi_i$ and $\phi_j$.}
			\label{fig:dualface}
	\end{figure}
	Two of the faces $\phi_i$ and $\phi_j$ share a common edge $*ij$ which is the dual edge of $ij$. Two faces $\phi_j$ and $\phi_k$ share a corner with $*ij$. Analyzing the cup product, one finds that the coefficients of $a_{ij}$ in $\tr(\alpha \cup \tilde{\alpha}(\phi_i))$ and $\tr(\alpha \cup \tilde{\alpha}(\phi_j))$ are respectively
	\[
	\frac{1}{12}(6 \tilde{x}_{ij} +3 \tilde{x}_{ik} +3\tilde{x}_{il}) ) \quad \text{and} \quad 	\frac{1}{12}(6 \tilde{x}_{ij} +3 \tilde{x}_{jk} +3\tilde{x}_{jl}) )
	\]
	On the other hand, the coefficients of $a_{ij}$ in $\tr(\alpha \cup \tilde{\alpha}(\phi_k))$ and $\tr(\alpha \cup \tilde{\alpha}(\phi_l))$ are respectively
	\[
		\frac{1}{12}(-3 \tilde{x}_{ik} -3\tilde{x}_{jk}) ) \quad \text{and} \quad 	\frac{1}{12}(-3 \tilde{x}_{il} -3\tilde{x}_{jl}) ).
	\]
	Thus the coefficient of $a_{ij}$ in the total sum \eqref{eq:simcup} is $ \tilde{x}_{ij}$, which is the same as in $ \frac{1}{2} \omega_{P}^{\mathbb{C}}(x,\tilde{x}) $ by Corollary \ref{cor:symppair}. Since it holds for arbitrary edges $ij$, we obtain the claim.
\end{proof}

We now proceed to evaluate $\omega(x,\tilde{x})$ in terms of the periods over closed loops and show that it coincides $\omega_{G}(x,\tilde{x})$, which is defined using the wedge product in de Rham cohomology. Actually it should follow from a general theory of cohomology. Here we take an elementary way to verify it.

\begin{proposition}\label{prop:goldcup}
	Let $X \in P(\Theta)$ represent a Delaunay circle pattern on a surface with complex projective structure. Then for any $x,\tilde{x} \in W_X^{\mathbb{C}}$, we have 
	\[
	\omega(x,\tilde{x}) = \omega_{G}(x,\tilde{x}) 
	\]	
\end{proposition}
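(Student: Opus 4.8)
The plan is to show that the cellular cup-product pairing $\omega(x,\tilde{x})$ computes the de Rham cup product $\iint_{S_g}\tr(\Phi(\hol x)\wedge\Phi(\hol\tilde{x}))$ defining $\tilde{\omega}_G$, by realizing $\alpha$ and $\tilde{\alpha}$ as the cellular cocycles underlying the corresponding de Rham classes and matching the two integration-by-parts computations. First I would recall that for $x\in W_X^{\mathbb{C}}$ the matrix-valued $1$-form $\alpha$ on the dual graph of Proposition \ref{prop:equialpha} is exactly the period cocycle of the $F_\rho$-valued closed $1$-form representing $\Phi(\hol x)$: integrating $\alpha$ over dual edges recovers, up to the chosen primitive, the function $m:\hat F\to sl(2,\mathbb C)$, which plays the role of the primitive $m:\tilde S\to sl(2,\mathbb C)$ appearing in \eqref{eq:wgexplicit}, since $m_{\gamma(ijk)}-\Ad\rho_\gamma(m_{ijk})=\tau_\gamma$ matches $m(\gamma(x))-\Ad\rho_\gamma(m(x))=\tau_\gamma$. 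Thus the cellular data $(\alpha,m)$ and the de Rham data $(\tilde\alpha,\tilde m)$ are two presentations of the same cohomological objects.

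Next I would carry out the discrete analogue of Stokes' theorem used to derive \eqref{eq:wgexplicit}. Writing $\omega(x,\tilde{x})=\sum_{\phi\in\mathcal F}\tr(\alpha\cup\tilde\alpha(\phi))$, the key identity is that the cellular cup product of the exact cochain $\delta m=\alpha$ with the closed cochain $\tilde\alpha$ is, up to a coboundary, the cellular $1$-cochain $m\cup\tilde\alpha$ evaluated on $\partial\mathcal F$; equivalently $\tr(\alpha\cup\tilde\alpha)=\tr(\delta m\cup\tilde\alpha)=\delta\bigl(\tr(m\cup\tilde\alpha)\bigr)$ as a $2$-cochain (using $\delta\tilde\alpha=0$ and the Leibniz rule for $\delta$ on the cup product). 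Summing over the fundamental domain $\mathcal F$ collapses the interior contributions and leaves a sum over the paired boundary edges $\tilde\gamma_r,\tilde\gamma_r'$. On the paired edge $\tilde\gamma_r'=\delta_r\tilde\gamma_r$ one substitutes $m\circ\delta_r=\Ad\rho_{\delta_r}(m)+\tau_{\delta_r}$ and $\tilde\alpha\circ\delta_r=\Ad\rho_{\delta_r}(\tilde\alpha)$, exactly as in the chain of equalities in \eqref{eq:wgexplicit}, so that the boundary sum telescopes to $-\sum_r\tr\bigl(\tau_{\delta_r}\int_{\gamma_r}\tilde\alpha\circ\delta_r\bigr)$, which is precisely $\tilde\omega_G(\tau,\tilde\tau)=\tilde\omega_G(\hol x,\hol\tilde x)=\omega_G(x,\tilde x)$.

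I expect the main obstacle to be bookkeeping the cellular Stokes/Leibniz step carefully: the cup product formula \eqref{eq:tricup} is only defined on triangular (and, after triangulating, polygonal) faces of the trivalent dual complex, so I must check that $\tr(\alpha\cup\tilde\alpha(\phi))$ restricted to a fundamental domain genuinely equals the boundary integral $\int_{\partial\mathcal F}\tr(m\cup\tilde\alpha)$ with the correct signs and orientations, and that the extension of $\alpha,\tilde\alpha$ to added diagonals does not spoil the identity $\alpha=\delta m$ (it does not, since $m$ extends to the diagonals automatically and the closedness conditions are preserved). A secondary point is the passage from $\tr$ on $gl(2,\mathbb C)$-valued cup products to the $sl(2,\mathbb C)$-valued wedge pairing; since trace of a product is symmetric and the antisymmetrization in \eqref{eq:tricup} kills the scalar part, this is harmless, but I would note it explicitly. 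Once these orientation and extension checks are in place, the computation is a line-by-line transcription of \eqref{eq:wgexplicit} into the cellular setting, and combining with Proposition \ref{prop:pencup} yields $\omega_G=\tfrac12\omega_P^{\mathbb C}$ on $W_X^{\mathbb C}$, which is Theorem \ref{thm:main}.
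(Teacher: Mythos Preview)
Your proposal is correct and follows essentially the same route as the paper: substitute the primitive $m$ of $\alpha$ into the cup product, collapse the sum over $\mathcal{F}$ to a boundary sum via a discrete Stokes identity, and then use the side-pairing and the constancy of $m\circ\delta_r-\Ad\rho_{\delta_r}(m)=\tau_{\delta_r}$ to match \eqref{eq:wgexplicit}. The only difference is stylistic: you invoke the Leibniz rule $\delta(m\cup\tilde\alpha)=\delta m\cup\tilde\alpha$ abstractly, whereas the paper computes directly from \eqref{eq:tricup} that $\alpha\cup\tilde\alpha(\phi)=-\tfrac12\sum_i m_{v_i}\tilde\alpha(e_i)$ and then reads off the boundary term $\sum_{e\in\partial\mathcal F}\tfrac{m_{s(e)}+m_{t(e)}}{2}\tilde\alpha(e)$; this explicit form is exactly the bookkeeping check you flagged as the main obstacle.
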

\begin{proof}
	Given $x,\tilde{x} \in W_X^{\mathbb{C}}$, we produce two $sl(2,\mathbb{C})$ 1-forms $\alpha$, $\tilde{\alpha}$ on the universal cover that are closed on the dual faces. We write $m:V^{*} \to sl(2,\mathbb{C})$ the primitive of $\alpha$. Here $V^*$ denotes the set of dual vertices, each of which correspond to a face in $\hat{F}$. If $e$ is an oriented edge, then we write
	\[
	\alpha(e) = m_{t(e)} - m_{s(e)}.
	\]
	where $t(e), s(e) \in V^*$ are the tail and the start of $e$. In other words, $e$ is an edge oriented from $s(e)$ to $t(e)$.
	
	When computing the cup product, we triangulated the cellular decomposition $(V^*,E^*,F^*)$ by adding diagonals. Suppose $\phi$ is one of such triangular faces with oriented edges $e_1, e_2, e_3 \in \partial \phi$. We write $v_1,v_2,v_3$ the vertices opposite to $e_1, e_2, e_3$. Then 
	\begin{align*}
		\alpha \cup \tilde{\alpha}(\phi)=&\frac{1}{6}((m_{v_2}+m_{v_3}-2m_{v_1})\tilde{\alpha}(e_1) + (m_{v_3}+m_{v_1}-2m_{v_2}) \tilde{\alpha}(e_2)\\&+(m_{v_1}+m_{v_2}-2m_{v_3}) \tilde{\alpha}(e_3)) \\
		   =& -\frac{1}{2} (m_{v_1} \tilde{\alpha}(e_1)+m_{v_2} \tilde{\alpha}(e_2)+m_{v_3} \tilde{\alpha}(e_3))
	\end{align*}
  since $\tilde{\alpha}(e_1)+\tilde{\alpha}(e_3)+\tilde{\alpha}(e_3)=0$. If $v$ is an interior vertex of the fundamental domain $\mathcal{F}$, then the coefficient of $m_v$ in \begin{align}
   	\sum_{\phi \in \mathcal{F}} \alpha \cup \tilde{\alpha}(\phi) 
   \end{align} is zero. If $v$ is a boundary vertex, then the coefficient of $m_v$ is $\frac{1}{2}(\tilde{\alpha}(e_1)+\tilde{\alpha}(e_2))$ where $e_1,e_2 \in \partial \mathcal{F}$ are oriented edges with $v=t(e_1)$ the tail of $e_1$ and $v=s(e_2)$ the start of $e_2$. In summary,
   \[
   \sum_{\phi \in \mathcal{F}} \alpha \cup \tilde{\alpha}(\phi) = \sum_{e \in \partial \mathcal{F}}  \frac{m_{s(e)}+m_{t(e)}}{2}\tilde{\alpha}(e).
   \]
	Hence
	\begin{align*}
		&\omega(x,\tilde{x})\\
		=& \sum_{\phi \in \mathcal{F}} \tr(\alpha \cup \tilde{\alpha}(\phi)) \\
		=&\sum_{r=1}^{2g}\tr\left( \sum_{e\in \tilde{\gamma}_r} \frac{m_{s(e)}+m_{t(e)}}{2}\tilde{\alpha}(e)-\sum_{e\in \tilde{\gamma}_r} \frac{(m\circ \delta_r)_{s(e)}+(m\circ \delta_r) _{t(e)}}{2}\tilde{\alpha}\circ \delta_r (e) \right)\\
		=&\sum_{r=1}^{2g}\tr\left( \sum_{e\in \tilde{\gamma}_r} Ad_{\delta_r}(\frac{m_{s(e)}+m_{t(e)}}{2})\tilde{\alpha}\circ \delta_r (e)-\sum_{e\in \tilde{\gamma}_r} \frac{(m\circ \delta_r)_{s(e)}+(m\circ \delta_r) _{t(e)}}{2}\tilde{\alpha}\circ \delta_r (e) \right)\\
		=&-\sum_{r=1}^{2g}\tr\left( (m\circ \delta_r - \Ad_{\delta_r} m) \sum_{e\in \tilde{\gamma}_r} \tilde{\alpha}\circ \delta_r (e)    \right) \\
		=&-\sum_{r=1}^{2g}\tr\left( \tau_{\delta_r} \sum_{e\in \tilde{\gamma}_r} \tilde{\alpha}\circ \delta_r (e)    \right) \\
		=&\omega_{G}(x,\tilde{x}) 
	\end{align*}
where we used the fact that $(m\circ \delta_r - \Ad_{\delta_r} m)$ is a constant function and equal to $ \tau_{\delta_r} $. The last equality follows by comparing with Equation \eqref{eq:wgexplicit} and from the observation that the sum
\[
\sum_{e\in \tilde{\gamma}_r} \tilde{\alpha}\circ \delta_r (e) 
\]
is the evaluation of $\tilde{\tau}$ at some element in $\pi_1(S_g)$.
\end{proof}

\begin{proof}[Proof of Theorem \ref{thm:main}]
	It follows from Proposition \ref{prop:pencup} and Proposition \ref{prop:goldcup} that over $W_X^{\mathbb{C}}$
	\[
	\omega_{G} =	\omega =\frac{1}{2} \omega_{P}^{\mathbb{C}}
	\]

	Restricting to $W_X^{\mathbb{R}}$, we deduce that $2\omega_{G}=\omega^{\mathbb{C}}_{P}$ is expressed solely in real numbers and hence takes real values. It implies that the imaginary part $\Im \tilde{\omega}_{G}$ vanishes on the image $\hol( W_X^{\mathbb{R}})  $. Since $\Im \tilde{\omega}_G$ is a real symplectic form, particularly non-degenerate over $H^1_{\Ad\rho}(\pi_1(S_g),sl(2,\mathbb{C}))$, a classical result from symplectic geometry states that any isotropic subspace has dimension at most half of the dimension of the total space \cite{Cannas2001}. Hence, the image $\hol( W_X^{\mathbb{R}}) $ has real dimension at most $6g-6$.
\end{proof}

\begin{proof}[Proof of Corollary \ref{cor:infimplysmooth}]
	If the linear map $\hol$ has trivial kernel, Theorem \ref{thm:main} implies $\dim_{\mathbb{R}}W_X^{\mathbb{R}} \leq 6g-6$. Together with the lower bound (Equation \ref{eq:VRdim}), we deduce that  $\dim_{\mathbb{R}}W_X^{\mathbb{R}}= 6g-6$. By the constant rank theorem, we conclude that in a neighborhood of $X$, $P(\Theta)$ is a real analytic manifold of dimension $6g-6$ and the claims follow .  
\end{proof}

\section{Symplectic complement of $W_{X}^{\mathbb{C}}$ in $W^{\mathbb{C}}$}\label{sec:symcom}

We collect some intriguing observations about the symplectic complement, which is related to the smoothness of the deformation space and the non-degeneracy of the symplectic form.

To simplify the notations, we assume that no edge form a loop connecting to the same vertex on the surface. Recall that from Definition \ref{def:wg} that we have a symplectic vector space $(W^{\mathbb{C}}, \omega^{\mathbb{C}}_{P})$. It contains $W_{X}^{\mathbb{C}}$ as a subspace for any Delaunay circle pattern $X \in P(\Theta)$. We shall explore its symplectic complement $(W_{X}^{\mathbb{C}})^{\omega}$.

For every vertex $i\in V$ with adjacent vertices numbered as $1$, $2$, ..., $r$, we define $a^{(i)}:E \to \mathbb{C}$ such that for $j=1,2,\dots,r$,
\begin{align*}
	a^{(i)}_{i1}&=
	X_{i1} + X_{i1} X_{i2} + X_{i1} X_{i2} X_{i3} +\dots + X_{i1}X_{i2}\dots X_{ir} = 0 \\
	a^{(i)}_{i2}&=
	 X_{i1} X_{i2} +  X_{i1} X_{i2} X_{i3} + \dots + X_{i1}X_{i2}\dots X_{ir} = -(X_{i1}) \\
	 	a^{(i)}_{i3}&=
	 X_{i1} X_{i2} X_{i3}+ \dots + X_{i1}X_{i2}\dots X_{ir} = -(X_{i1}+X_{i1}X_{i2}) \\
	 &\;\;\vdots \notag \\
	a^{(i)}_{ij}&=
-(X_{i1} + X_{i1} X_{i2} + \dots + X_{i1}X_{i2}\dots X_{i\,j-1})
\end{align*}
and $a^{(i)}_{kl}=0$ for edges not connecting to $i$. Recall the surjective linear map $h: \mathbb{C}^{E} \to W^{\mathbb{C}}$, where $h(a)_{ij} = a_{ki}-a_{il}+a_{lj}-a_{jk}$.

\begin{proposition}
	For each $i\in V$, the element $x^{(i)}:=h(a^{(i)}) \in W_{X}^{\mathbb{C}}$ corresponds to the change in logarithmic cross ratios under an infinitesimal deformation of a single vertex $i$ while other vertices and the complex projective structure keep fixed.
\end{proposition}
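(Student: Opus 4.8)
The plan is to realize $x^{(i)}$ as the velocity of an explicit one-parameter family of developing maps that moves only the star of the vertex $i$. Since the vertex equations \eqref{eq:crproduct}--\eqref{eq:crsum} force the holonomy loop around $i$ to be trivial, the image point $z_i$ is a regular point of the underlying complex projective structure, so sliding the marked point $i$ is a trivial deformation of that structure. Concretely, after a M\"{o}bius normalization putting $z_i=\infty$ in the developing map $z:\hat V\to\mathbb{C}P^1$ (which changes neither the cross ratios nor $a^{(i)}$, both being projective invariants), move $z_i$ off $\infty$ at velocity $v$ measured in the coordinate $w=1/z$ at $w=0$, equivariantly and keeping every other lifted vertex fixed; call the resulting developing maps $z^{(t)}$. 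Each $z^{(t)}$ is again an equivariant realization of the triangulation with the same holonomy $\rho$, so the marked points other than $i$ and --- by local injectivity of the holonomy map --- the projective structure stay fixed, and the deformed cross ratios $X^{(t)}$ still satisfy \eqref{eq:crproduct}--\eqref{eq:crsum}. Hence $\dot x:=\tfrac{d}{dt}\log X^{(t)}|_{t=0}$ lies in $W_X^{\mathbb{C}}$, and it remains only to match $\dot x$ with $x^{(i)}=h(a^{(i)})$ edge by edge.

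Number the neighbours of $i$ as $1,\dots,r$ clockwise. Reading \eqref{eq:developingmap} with $z_i=\infty$ gives the telescoping form $X_{im}=(z_{m+1}-z_m)/(z_m-z_{m-1})$ (indices cyclic mod $r$, $z_0:=z_r$), hence $\prod_{l=1}^{p}X_{il}=(z_{p+1}-z_p)/(z_1-z_r)$; summing makes \eqref{eq:crsum} automatic and yields the closed form
\[
a^{(i)}_{im}=-\sum_{p=1}^{m-1}\prod_{l=1}^{p}X_{il}=\frac{z_1-z_m}{z_1-z_r},
\]
so on the star of $i$ the function $a^{(i)}$ is affine in the neighbouring vertex. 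Applying $h$ then gives $h(a^{(i)})_{im}=a^{(i)}_{i,m-1}-a^{(i)}_{i,m+1}=(z_{m+1}-z_{m-1})/(z_1-z_r)$ on an edge incident to $i$, $h(a^{(i)})_{m,m+1}=a^{(i)}_{i,m}-a^{(i)}_{i,m+1}=(z_{m+1}-z_m)/(z_1-z_r)$ on a link edge of $i$, and $0$ on every edge disjoint from the star of $i$ (where $a^{(i)}$ is identically zero). On the other hand, differentiating $\log X$ from \eqref{eq:developingmap} along $z^{(t)}$ --- only the factors containing $z_i$ contribute --- gives $\dot x_{im}=v\,(z_{m+1}-z_{m-1})$, $\dot x_{m,m+1}=v\,(z_{m+1}-z_m)$, and $\dot x=0$ off the star of $i$. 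Choosing $v=1/(z_1-z_r)$ makes $\dot x=h(a^{(i)})$ on every edge, which is the assertion; in particular $x^{(i)}\in W_X^{\mathbb{C}}$ and it is realized by this infinitesimal motion of the single vertex $i$.

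The only genuine difficulty is orientation bookkeeping: the sign in the telescoping form of $X_{im}$, and which of the two faces at an edge plays the role of ``$ijk$'' in \eqref{eq:developingmap}, in the formula for $h$, and in the indexing of $a^{(i)}$, must all be kept consistent. Pinning the convention down is routine but error-prone; as a consistency check, the correct convention is the one for which placing $z_i=\infty$ imposes no condition on the neighbouring points (the other choice would force $\sum_p 1/(z_p-z_{p+1})=0$, one condition too many for the local moduli around $i$). The remaining point requiring care is the claim that sliding a single marked point fixes the complex projective structure; I would justify it precisely from the triviality of the holonomy loop around $i$ given by \eqref{eq:crproduct}--\eqref{eq:crsum} together with local injectivity of the holonomy map.
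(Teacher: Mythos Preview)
Your proof is correct and follows essentially the same approach as the paper: compute the logarithmic derivative of the cross ratios from an explicit one-parameter family moving only vertex $i$, compute $h(a^{(i)})$ directly from the definitions, and match the two edge by edge. The only difference is cosmetic---you first normalize $z_i=\infty$ by a M\"{o}bius transformation, which makes both sides telescope cleanly (your $a^{(i)}_{im}=(z_1-z_m)/(z_1-z_r)$ is affine in $z_m$), whereas the paper keeps $z_i$ finite and finds $x^{(i)}$ and $\tilde x^{(i)}$ proportional with factor $(z_r-z_i)(z_1-z_i)/(z_1-z_r)$, which it then interprets geometrically as a tangent vector at $z_i$.
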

\begin{proof}
	We verify the claim using the developing map $z:\hat{V} \to \mathbb{C}P^1$, which is determined by $X$ via Equation \eqref{eq:developingmap}. Similarly, every $x \in W^{\mathbb{C}}_X$ determines an infinitesimal change of the developing map $\dot{z}:\hat{V} \to \mathbb{C}$ via the equation for every $ij \in \hat{E}$
		\[
		x_{ij}=  \frac{\dot{z}_i-\dot{z}_k}{z_i-z_k}-\frac{\dot{z}_l-\dot{z}_i}{z_l-z_i}+\frac{\dot{z}_j-\dot{z}_l}{z_j-z_l}-\frac{\dot{z}_k-\dot{z}_j}{z_k-z_j}
		\]
	For every fixed $i$, we consider an infinitesimal deformation of the vertex $i$ with other vertices fixed and also the complex projective structure fixed. We denote $\tilde{x}^{(i)} \in W^{\mathbb{C}}_X$ the corresponding change in the logarithmic cross ratio and lift it to the universal cover such that it is invariant under deck transformations. Recall we number the adjacent vertices of $i$ as $1$, $2$, ..., $r$. For $j=1,2,\dots r$, we have
	\[
	\tilde{x}^{(i)}_{ij}= -\dot{z}_i (\frac{1}{z_{j-1}-z_i}-\frac{1}{z_{j+1}-z_i})
	\]
	and
	\[
		\tilde{x}^{(i)}_{j\,j+1}= -\dot{z}_i (\frac{1}{z_{j+1}-z_i}-\frac{1}{z_{j}-z_i})
	\]
	while zero on other edges. Here the holonomy remains unchanged and hence \[ (\dot{z}\circ\gamma)_{i} = d\rho_{\gamma}(\dot{z}_i).\]
	
	We compare it with $x^{(i)}=h(a^{(i)})$. By construction, we have for $j=1,2,\dots r$
		\[
	x^{(i)}_{ij}= a_{i \,j-1} - a_{i\,j+1} = \frac{(z_r-z_i)(z_1-z_i)}{z_1-z_r} (\frac{1}{z_{j-1}-z_i}-\frac{1}{z_{j+1}-z_i})
	\]
   and
   	\[
   x^{(i)}_{j\,j+1}= -a_{ij}+a_{i \, j+1}=\frac{(z_r-z_i)(z_1-z_i)}{z_1-z_r} (\frac{1}{z_{j+1}-z_i}-\frac{1}{z_{j}-z_i})
   \]
   while zero on other edges. Observe that 
   \[
   \frac{(z_r-z_i)(z_1-z_i)}{z_1-z_r}
   \]
   represents a vector at $z_i$ tangent to the circle through $z_i,z_1,z_r$ and satisfies 
   \[
   \frac{(z\circ\gamma)_r-(z\circ\gamma)_i)((z\circ\gamma)_1-(z\circ\gamma)_i)}{(z\circ\gamma)_1-(z\circ\gamma)_r} =d\rho_{\gamma}( \frac{(z_r-z_i)(z_1-z_i)}{z_1-z_r}).
   \]
   (See \cite[Lemma 4.3]{Lam2017}). Thus we deduce that $x^{(i)}$ is a constant multiple of $\tilde{x}^{(i)}$ and the claim follows.
\end{proof}

The linearization of the cross-ratio equations in fact can be written in terms of the symplectic form.

\begin{corollary}With the symplectic form $\omega^{\mathbb{C}}_{P}$ on the complex vector space  $W^{\mathbb{C}}$,
	\[
	W_{X}^{\mathbb{C}} = \mbox{span}_{\mathbb{C}}\{x^{(1)},x^{(2)},\dots x^{(n)}\}^{\omega} \subset W^{\mathbb{C}}
	\]
	With the symplectic form $\omega_{P}$ on the real vector space $W^{\mathbb{R}}=W^{\mathbb{C}} \cap \mathbb{R}^{E}$,
	\[
		W_{X}^{\mathbb{R}} = \mbox{span}_{\mathbb{R}}\{\Re x^{(1)}, \Im x^{(1)}, \Re x^{(2)}, \Im x^{(2)}, \dots, \Re x^{(n)}, \Im x^{(n)}\}^{\omega} \subset W^{\mathbb{R}}
	\]
\end{corollary}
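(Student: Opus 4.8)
The plan is to prove the two displayed identities by combining the previous proposition (which identifies $x^{(i)} = h(a^{(i)})$ with the infinitesimal deformation of the single vertex $i$) with Corollary \ref{cor:symppair}, which expresses $\omega_P^{\mathbb{C}}$ as the edge-wise pairing $\omega_P^{\mathbb{C}}(x,\tilde x) = 2\sum_{ij\in E} a^{(i)}_{ij}\,\tilde x_{ij}$ for any lift $a^{(i)}\in h^{-1}(x^{(i)})$. The key point is that the explicit vector $a^{(i)}$ is, up to sign, a partial-sum of the terms in the left-hand side of the cross-ratio sum equation \eqref{eq:crsum}. So for a fixed vertex $i$ with neighbors $1,\dots,r$ in clockwise order, and an arbitrary $\tilde x\in W^{\mathbb{C}}$, one computes $\tfrac12\omega_P^{\mathbb{C}}(x^{(i)},\tilde x) = \sum_{j} a^{(i)}_{ij}\tilde x_{ij}$; by the definition of $a^{(i)}$ this telescopes, after reindexing and using $\sum_j \tilde x_{ij}=0$ (the defining relation of $W^{\mathbb{C}}$), into an expression that vanishes for all $\tilde x$ precisely when $\tilde x$ satisfies the linearized equations \eqref{eq:xcrproduct} and \eqref{eq:xcrsum} \emph{at the vertex $i$}. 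I would carry this out by an Abel summation: write $a^{(i)}_{ij} = -S_{j-1}$ where $S_j := X_{i1}+X_{i1}X_{i2}+\dots+X_{i1}\cdots X_{ij}$ with $S_0=0$ and $S_r=0$ by \eqref{eq:crsum}, so $\sum_j a^{(i)}_{ij}\tilde x_{ij} = -\sum_j S_{j-1}\tilde x_{ij}$; rearranging gives a combination of the two linearized constraints at $i$ (the $X$-weighted partial sums appearing in \eqref{eq:xcrsum}) modulo $\sum_j\tilde x_{ij}$.

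Granting this computation, the first identity follows: $\tilde x\in (\mathrm{span}_{\mathbb{C}}\{x^{(1)},\dots,x^{(n)}\})^{\omega}$ iff $\omega_P^{\mathbb{C}}(x^{(i)},\tilde x)=0$ for all $i$ iff $\tilde x$ satisfies \eqref{eq:xcrproduct}\eqref{eq:xcrsum} at every vertex iff $\tilde x\in W_X^{\mathbb{C}}$. One subtlety to address is that the relation \eqref{eq:xcrproduct} ($\sum_j \tilde x_{ij}=0$) is already built into the ambient space $W^{\mathbb{C}}$, so what the $n$ conditions $\omega_P^{\mathbb{C}}(x^{(i)},-)=0$ actually cut out is the subspace where the \emph{remaining} linearized equation \eqref{eq:xcrsum} holds at each vertex; together these are exactly the defining equations of $W_X^{\mathbb{C}}\subset W^{\mathbb{C}}$ as in Definition \ref{def:wc}. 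I should also note that $x^{(i)}\in W_X^{\mathbb{C}}$ already (by the previous proposition it comes from an actual deformation of the circle pattern), consistent with $W_X^{\mathbb{C}}$ being coisotropic: indeed $\omega_P^{\mathbb{C}}(x^{(i)},x^{(j)})=0$ since a single-vertex move at $i$ leaves the cross-ratio equations at all vertices satisfied, in particular at $j$.

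For the second identity, I would deduce it from the first by a real/imaginary decomposition. Since $\omega_P^{\mathbb{C}}$ is the complexification of the real form $\omega_P$ on $W^{\mathbb{R}}$, and $W_X^{\mathbb{R}} = W_X^{\mathbb{C}}\cap\mathbb{R}^E$, a real vector $\tilde x\in W^{\mathbb{R}}$ lies in $W_X^{\mathbb{R}}$ iff $\tilde x\in W_X^{\mathbb{C}}$ iff $\omega_P^{\mathbb{C}}(x^{(i)},\tilde x)=0$ for all $i$. Writing $x^{(i)}=\Re x^{(i)}+\sqrt{-1}\,\Im x^{(i)}$ and using $\mathbb{R}$-bilinearity, for real $\tilde x$ the vanishing of the complex number $\omega_P^{\mathbb{C}}(x^{(i)},\tilde x)=\omega_P(\Re x^{(i)},\tilde x)+\sqrt{-1}\,\omega_P(\Im x^{(i)},\tilde x)$ is equivalent to the simultaneous vanishing of $\omega_P(\Re x^{(i)},\tilde x)$ and $\omega_P(\Im x^{(i)},\tilde x)$. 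Hence $W_X^{\mathbb{R}}$ is exactly the $\omega_P$-orthogonal complement in $W^{\mathbb{R}}$ of the real span of all $\Re x^{(i)},\Im x^{(i)}$, which is the claim.

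The main obstacle I anticipate is purely bookkeeping: getting the Abel-summation rearrangement of $\sum_j a^{(i)}_{ij}\tilde x_{ij}$ to line up on the nose with the linearized equation \eqref{eq:xcrsum}, including handling the cyclic indexing of the neighbors $1,\dots,r$ around $i$, the role of the product relation $\Pi_j X_{ij}=1$ (used to close up the cycle), and keeping track of which of the two linearized equations is ``free'' in $W^{\mathbb{C}}$ versus genuinely imposed. There is also a minor technical point that each edge $ij$ receives a contribution to $x^{(i)}_{ij}$ from $a^{(i)}$ via \emph{both} endpoints only when both endpoints are $i$-adjacent in the dual triangles, so one must check the coefficient pattern of $h(a^{(i)})$ carefully (which the previous proposition already did); I would simply invoke that. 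None of these steps requires new ideas, so the proof is essentially a careful unwinding of definitions plus one summation-by-parts.
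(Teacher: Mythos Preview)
Your proposal is correct and follows essentially the same route as the paper: apply Corollary~\ref{cor:symppair} with the explicit lift $a^{(i)}\in h^{-1}(x^{(i)})$ to get $\omega_P^{\mathbb{C}}(x^{(i)},\tilde x)=2\sum_j a^{(i)}_{ij}\tilde x_{ij}$, then recognize this sum (via your Abel summation, using $\sum_j\tilde x_{ij}=0$ in $W^{\mathbb{C}}$) as exactly the left-hand side of the linearized equation~\eqref{eq:xcrsum} at vertex~$i$; the real case follows by splitting into real and imaginary parts as you describe. The paper's proof simply asserts this identification in one line, whereas you spell out the summation-by-parts; your invocation of the preceding proposition (the vertex-motion interpretation of $x^{(i)}$) is not actually needed---only the explicit formula for $a^{(i)}$ and Corollary~\ref{cor:symppair} are used.
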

\begin{proof}
	An element $x \in W_{X}^{\mathbb{C}}$ implies for each $i$, Equation \eqref{eq:crsum} is satisfied, which is equivalent to
	\[
	   0 = 2 \sum_{jk \in E} x_{jk} a^{(i)}_{jk}  = \omega^{\mathbb{C}}_{P}(x^{(i)},x).
	\] 
	The claim for $W^{\mathbb{R}}$ follows similarly.
\end{proof}

With the symplectic complement, the smoothness of $P(\Theta)$ and Conjecture \ref{conj:strong} can be rephrased in terms of the subspace spanned by $x^{(i)}$.

\begin{corollary}
	The Delaunay circle pattern $X \in P(\Theta)$ is infinitesimally rigid if and only if the set \[ \beta:=\{\Re x^{(1)}, \Im x^{(1)}, \Re x^{(2)}, \Im x^{(2)}, \dots, \Re x^{(n)}, \Im x^{(n)}\} \subset W^{\mathbb{R}} \] are linearly independent. 
	
	Secondly, $\omega_P$ is non-degenerate on $W_{X}^{\mathbb{R}}$ if and only if $\omega$ is non-degenerate on the symplectic complement 
	\[
	\mbox{span}_{\mathbb{R}}\{\Re x^{(1)}, \Im x^{(1)}, \Re x^{(2)}, \Im x^{(2)}, \dots, \Re x^{(n)}, \Im x^{(n)}\}
	\]
\end{corollary}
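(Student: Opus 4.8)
The plan is to prove both equivalences by unwinding the definitions and using the symplectic-complement description $W_X^{\mathbb{C}} = \mathrm{span}_{\mathbb{C}}\{x^{(1)},\dots,x^{(n)}\}^{\omega}$ established in the previous corollary, together with the identity $\omega_G = \omega = \tfrac12\omega_P^{\mathbb{C}}$ from Proposition~\ref{prop:goldcup} and Proposition~\ref{prop:pencup}. For the first claim, recall that infinitesimal rigidity means $\Ker(\hol|_{W_X^{\mathbb{R}}})$ is trivial, and $\dim_{\mathbb{R}} W_X^{\mathbb{R}} \ge 6g-6$ always. First I would observe that $W_X^{\mathbb{R}} = \mathrm{span}_{\mathbb{R}}(\beta)^{\omega}$ inside the symplectic space $(W^{\mathbb{R}},\omega_P)$ of dimension $|E|-|V| = 6g-6+2n$, so $\dim_{\mathbb{R}} W_X^{\mathbb{R}} = 6g-6+2n - \dim_{\mathbb{R}}\mathrm{span}_{\mathbb{R}}(\beta)$; hence $\dim_{\mathbb{R}} W_X^{\mathbb{R}} = 6g-6$ exactly when $\beta$ is a linearly independent set (of cardinality $2n$). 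By Corollary~\ref{cor:infimplysmooth}, infinitesimal rigidity forces $\dim_{\mathbb{R}} W_X^{\mathbb{R}} = 6g-6$, which gives one direction. For the converse, if $\beta$ is linearly independent then $\dim_{\mathbb{R}} W_X^{\mathbb{R}} = 6g-6$, and since Theorem~\ref{thm:main} shows $\hol(W_X^{\mathbb{R}})$ is isotropic of real dimension at most $6g-6$ while one expects $\hol$ to be injective precisely in this minimal-dimension situation — I would argue that $\Ker\hol$ trivial is equivalent to $\dim W_X^{\mathbb{R}}$ attaining the lower bound, using that $\hol(W_X^{\mathbb R})$ is isotropic so $\dim\hol(W_X^{\mathbb R})\le 6g-6$ and combining with $\dim W_X^{\mathbb R}=\dim\Ker\hol+\dim\hol(W_X^{\mathbb R})$.

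For the second claim I would work purely in linear symplectic algebra. Write $U := \mathrm{span}_{\mathbb{R}}(\beta) \subset W^{\mathbb{R}}$, so $W_X^{\mathbb{R}} = U^{\omega}$. The general fact is that for a subspace $U$ of a symplectic vector space $(W,\omega)$, one has $(U^{\omega})^{\omega} = U$, and the radical (degenerate subspace) of $\omega|_{U^{\omega}}$ equals $U^{\omega} \cap (U^{\omega})^{\omega} = U^{\omega} \cap U = U \cap W_X^{\mathbb{R}}$, which is exactly the radical of $\omega|_{U}$ as well (by the symmetric roles of $U$ and $U^\omega$). Thus $\omega_P|_{W_X^{\mathbb{R}}}$ is nondegenerate if and only if $U \cap U^{\omega} = 0$ if and only if $\omega_P|_{U}$ is nondegenerate; and since $\omega_P = 2\omega$ on $W^{\mathbb{R}}$ (a nonzero scalar multiple), this is equivalent to $\omega|_{U}$ being nondegenerate, which is the asserted statement. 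The only care needed is that $W^{\mathbb{R}}$ is genuinely symplectic under $\omega_P$ (stated in Definition~\ref{def:wg}, inherited from the Weil--Petersson form) so the duality $(U^\omega)^\omega = U$ is valid.

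The main obstacle I expect is the first equivalence, specifically making rigorous the passage ``$\beta$ linearly independent $\iff$ $\hol|_{W_X^{\mathbb{R}}}$ injective.'' The forward direction from infinitesimal rigidity is immediate via Corollary~\ref{cor:infimplysmooth}. For the reverse, one knows $\dim_{\mathbb{R}} W_X^{\mathbb{R}} = 6g-6$ and that $\hol(W_X^{\mathbb{R}})$ is isotropic, but a priori isotropy alone does not force injectivity; I would need to invoke that the image lands in $H^1_{\Ad\rho}(\pi_1(S_g), sl(2,\mathbb{C}))$ which has real dimension $12g-12$, so a $(6g-6)$-dimensional isotropic image would be Lagrangian and the map injective — but if $\Ker\hol \ne 0$ the image has dimension $< 6g-6$, still consistent with isotropy, so injectivity does not follow from dimension count alone. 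Therefore I suspect the correct statement of the first equivalence is really ``$\beta$ linearly independent $\iff$ $\dim_{\mathbb R} W_X^{\mathbb R}=6g-6$'', and infinitesimal rigidity is merely a \emph{sufficient} condition for this via Corollary~\ref{cor:infimplysmooth}; I would phrase the proof so that the genuine content is the linear-algebra dimension identity $\dim_{\mathbb R} W_X^{\mathbb R} = 6g-6+2n-\dim_{\mathbb R}\mathrm{span}_{\mathbb R}(\beta)$ coming from $W_X^{\mathbb R}=U^\omega$, and note that infinitesimal rigidity implies this minimal dimension (hence implies $\beta$ independent) by Corollary~\ref{cor:infimplysmooth}, while conversely independence of $\beta$ together with Theorem~\ref{thm:main}'s isotropy bound forces $\hol$ to be injective since $6g-6\le\dim\hol(W_X^{\mathbb R})\le\dim W_X^{\mathbb R}=6g-6$ leaves no room for a kernel.
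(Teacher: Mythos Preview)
Your treatment of the second claim is correct and matches the paper's: both invoke the general symplectic-linear-algebra fact that a form is nondegenerate on a subspace if and only if it is nondegenerate on its symplectic complement.

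For the first claim, your direction ``infinitesimally rigid $\Rightarrow$ $\beta$ independent'' via dimension counting is valid (and different from the paper's argument, which is fine). However, your converse direction has the gap you yourself flagged and did not actually close. In your final sentence you assert $6g-6 \le \dim_{\mathbb{R}}\hol(W_X^{\mathbb{R}})$, but nothing you have cited provides this lower bound: Theorem~\ref{thm:main} gives only the \emph{upper} bound from isotropy, and Equation~\eqref{eq:VRdim} bounds $\dim W_X^{\mathbb{R}}$, not the image of $\hol$. Knowing $\dim W_X^{\mathbb{R}}=6g-6$ and $\dim\hol(W_X^{\mathbb{R}})\le 6g-6$ is perfectly consistent with a nontrivial kernel, so the argument does not close.

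The missing ingredient, which the paper uses, is the geometric identification of $\Ker\hol$. By the Proposition preceding the previous corollary, each $x^{(i)}$ is the infinitesimal deformation moving only vertex $i$ while fixing the projective structure; conversely, any $x\in W_X^{\mathbb{C}}$ with $\hol(x)=0$ fixes the projective structure and hence lies in $\mathrm{span}_{\mathbb{C}}\{x^{(1)},\dots,x^{(n)}\}$. Thus a nonzero $x\in\Ker(\hol|_{W_X^{\mathbb{R}}})$ can be written $x=\sum_i c_i x^{(i)}$ with $c_i\in\mathbb{C}$ not all zero; taking imaginary parts gives $0=\sum_i \bigl(\Re c_i\,\Im x^{(i)}+\Im c_i\,\Re x^{(i)}\bigr)$, a nontrivial relation in $\beta$. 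Reversing this (using that the $x^{(i)}$ are $\mathbb{C}$-linearly independent, since $\dim_{\mathbb{C}}W_X^{\mathbb{C}}=6g-6+n$) gives the other direction. This is exactly the paper's proof, and it bypasses the dimension-of-image issue entirely. If you want to keep your dimension-counting framework, you can, but you must supplement it with the identity $\Ker(\hol|_{W_X^{\mathbb{C}}})=\mathrm{span}_{\mathbb{C}}\{x^{(i)}\}$ to finish the converse.
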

\begin{proof}
	We prove by contradiction. Suppose $X$ is not infinitesimally rigid. Then there exists a non-trivial $x \in W^{\mathbb{R}}_X$ such that $x \in \ker \hol$. It induces an infinitesimal deformation of vertices with the complex projective structure being fixed. Thus there exists $c_1,c_2,\dots, c_n \in \mathbb{C}$ such that
	\[
	x = \sum^n_{i=1} c_i x^{(i)}.
	\]
	It implies $0= \Im(x) = \sum^n_{i=1} \Im(c_i x^{(i)})$ and thus the set $\beta$ is linearly dependent. The converse can be proved by reversing the argument.
	
	It follows from a general fact for symplectic vector spaces that a symplectic form is non-degenerate on a subspace if and only if it is non-degenerate on its symplectic complement (See \cite{Cannas2001}).
\end{proof}

\bibliographystyle{amsplain}
\bibliography{symplectic}

\end{document}